\newfont{\cyr}{wncyr10 scaled\magstep0}
\newcommand{\setmid}{\mathrel{}\middle|\mathrel{}} 
\newcommand{\op}[1]{\mathop{\mathrm{#1}}}
\newcommand{\C}{\mathbb{C}}
\newcommand{\Q}{\mathbb{Q}}
\newcommand{\R}{\mathbb{R}}
\newcommand{\Z}{\mathbb{Z}}
\DeclareMathOperator{\Gal}{Gal} 
\DeclareMathOperator{\GL}{GL} 
\DeclareMathOperator{\Tr}{Tr} 
\theoremstyle{plain}
 \newtheorem{theorem}{Theorem}[section]
 \crefname{theorem}{Theorem}{Theorems}
 \newtheorem{proposition}[theorem]{Proposition}
 \crefname{proposition}{Proposition}{Propositions}
 \newtheorem{lemma}[theorem]{Lemma}
 \crefname{lemma}{Lemma}{Lemmas}
 \newtheorem{corollary}[theorem]{Corollary}
 \crefname{corollary}{Corollary}{Corollaries}
 \crefname{conjecture}{Conjecture}{Conjectures}
 \crefname{hypothesis}{Hypothesis}{Hypotheses}
 \newtheorem{question}[theorem]{Question}
 \crefname{question}{Question}{Questions}
 \crefname{problem}{Problem}{Problems}
\theoremstyle{definition} 
 \newtheorem{definition}[theorem]{Definition}
 \crefname{definition}{Definition}{Definitions}
 \newtheorem{example}[theorem]{Example}
 \crefname{example}{Example}{Examples}
 \newtheorem{remark}[theorem]{Remark}
 \crefname{remark}{Remark}{Remarks}
\title{Galois trace forms of type $A_{n}, D_{n}, E_{n}$ for odd $n$}
\author{Riku Higa}
\address[Riku Higa]{Department of Mathematics \\ Faculty of Science and Technology \\ Tokyo University of Science, 2641, Yamazaki, Noda, Chiba, Japan}
\email{6123702@ed.tus.ac.jp \\ 6121510@ed.tus.ac.jp}
\author{Yoshinosuke Hirakawa}
\address[Yoshinosuke Hirakawa]{Department of Mathematics \\ Faculty of Science and Technology \\ Tokyo University of Science, 2641, Yamazaki, Noda, Chiba, Japan}
\email{hirakawa\_yoshinosuke@rs.tus.ac.jp \\  hirakawa@keio.jp}
	\thanks{This research was supported in part by KAKENHI 21K13779.}
	\subjclass[2010]{primary 11E12; 
	secondary
		11E20; 
		11R16; 
		11R80; 
		17B22; 
	}
\keywords{ideal lattices, root lattices, totally real fields}
\begin{document}

\maketitle

\begin{abstract}
Let $p$ be an odd prime number
and $\zeta_{p} := \exp(2\pi i/p)$.
Then, it is well-known that
the $A_{p-1}$-root lattice can be realized as the (Hermitian) trace form of the $p$-th cyclotomic extension $\Q(\zeta_{p})/\Q$
restricted to the fractional ideal generated by $(1-\zeta_{p})^{-(p-3)/2}$.
In this paper,
in contrast with the case of the $A_{p-1}$-root lattice,
we prove the following theorem:
Let $n$ be an odd positive integer
and $F/\Q$ be a Galois extension of degree $n$.
Then, there exist no fractional ideals $\Lambda \subset F$
such that the restricted trace form $(\Lambda, \Tr|_{\Lambda \times \Lambda})$ is of type $A_{n}, D_{n}, E_{n}$.
The proof is done by the prime ideal factorization of fractional ideals of $F$
with care of certain 2-adic obstruction.
Additionally, we prove that
every cyclic cubic field contains infinitely many distinct lattices of type $A_{3}$ (i.e., normalized face centered cubic lattices) having normal $\Z$-bases.
The latter fact is in contrast with another fact that
among quadratic fields
only $\Q(\sqrt{\pm3})$ contain lattices of type $A_{2}$.
\end{abstract}



\section{Introduction}

In modern mathematics,
both of lattices and modular forms appear in many research areas
across pure and applied mathematics.
In particular, there are mutual implications
between the root systems of even unimodular lattices and modular forms.
The theory of modular forms can be applied to classify even unimodular lattices
in low dimensions (up to 24 dimensions) by examining their root systems
(see e.g.\ \cite[Ch.\ 3]{Ebeling}).
On the other hand,
for every odd prime number $p$,
Ebeling [4, Ch. 5] constructed a Hilbert modular theta function over $\Q(\zeta_{p}+\zeta_{p}^{-1})$
by constructing an even unimodular lattice from the fractional ideal of $\Q(\zeta_{p})$ generated by $(1-\zeta_{p})^{-(p-3)/2}$ which is a lattice of type $A_{p-1}$.

In order to generalize Ebeling's construction to more general even root lattices, especially irreducible ones of type $A_{n}, D_{n}, E_{n}$,
it is natural to ask which lattices can be realized as ideal lattices over number fields.
For example, the only quadratic fields containing lattice of type $A_{2}$ are $\Q(\sqrt{\pm3})$ (see \S5).
Moreover,
in \cite{BF_Hirzebruch70},
the lattice of type $D_{4}$, $E_{6}$, and $E_{8}$ are constructed as fractional ideals of $\Q(\zeta_{8})$, $\Q(\zeta_{9})$, and $\Q(\zeta_{20})$ etc respectively.
However, as far as the authors know,
there is no literature which determines completely
which lattices of type $A_{n}, D_{n}, E_{n}$ can be realized as fractional ideals of number fields.
This leads us to the following question:

\begin{question} \label{motivation}
Let $n \geq 2$ be an integer.
\begin{enumerate}
\item
Can one find a totally real or CM number field $F$ of degree $n$
and a fractional ideal $\Lambda$ of $F$ such that
the lattice $(\Lambda, \Tr|_{\Lambda \times \Lambda})$ is of type $A_{n}, D_{n}, E_{n}$?

\item
If the answer to {\rm (1)} is negative for a given $F$,
can one find a sub $\Z$-module $\Lambda$ of $F$ such that
the lattice $(\Lambda, \Tr|_{\Lambda \times \Lambda})$ is of type $A_{n}, D_{n}, E_{n}$?
\end{enumerate}
\end{question}

For every Galois extension of odd degree,
the following \cref{main_ADE} gives a negative answer to \cref{motivation} (1).
This is the first main result of this paper.

\begin{theorem} \label{main_ADE}
Let $n$ be an odd positive integer
and $F/\Q$ be a Galois extension of degree $n$.
Then, there exist no fractional ideals $\Lambda \subset F$
such that $(\Lambda, \Tr|_{\Lambda \times \Lambda})$ are of type $A_{n}, D_{n}, E_{n}$.
\end{theorem}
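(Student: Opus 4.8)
\emph{Proof strategy.} The plan is to play the rigidity of the discriminant group of the root lattice off against the $2$-adic shape of the trace form. Suppose $\Lambda = \mathfrak{a} \subset F$ is a fractional ideal for which $(\mathfrak{a}, \Tr|_{\mathfrak{a}\times\mathfrak{a}})$ is isometric to a root lattice $L$ of type $A_{n}$, $D_{n}$, or $E_{n}$; among $E_{6}, E_{7}, E_{8}$ only $E_{7}$ has odd rank, so here $E_{n}$ means $E_{7}$. Since $F/\Q$ is Galois of odd degree, $\Gal(F/\Q)$ has no element of order $2$, so $F$ is totally real; in particular $\Tr|_{\mathfrak{a}\times\mathfrak{a}}$ is positive definite (as a root lattice must be), and the trace dual of $\mathfrak{a}$ is $\mathfrak{a}^{\vee} = \mathfrak{d}_{F}^{-1}\mathfrak{a}^{-1}$, where $\mathfrak{d}_{F}$ denotes the different. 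Integrality of $L$ forces $\mathfrak{b} := \mathfrak{a}^{2}\mathfrak{d}_{F}$ to be an integral ideal, and multiplying the inclusion $\mathfrak{a} \subseteq \mathfrak{a}^{\vee}$ by the invertible ideal $\mathfrak{a}\mathfrak{d}_{F}$ yields a canonical isomorphism $L^{*}/L \cong \mathfrak{a}^{\vee}/\mathfrak{a} \cong \mathcal{O}_{F}/\mathfrak{b}$; thus the discriminant group of the lattice is $\mathcal{O}_{F}/\mathfrak{b}$.

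For odd $n$ the discriminant groups $A_{n}^{*}/A_{n} \cong \Z/(n+1)\Z$, $D_{n}^{*}/D_{n} \cong \Z/4\Z$, and $E_{7}^{*}/E_{7} \cong \Z/2\Z$ are all cyclic of even order, so the $2$-part of $\mathcal{O}_{F}/\mathfrak{b}$ is a nontrivial cyclic $2$-group $\Z/2^{a}\Z$ with $a := v_{2}(|L^{*}/L|) \geq 1$. By the Chinese Remainder Theorem there is a unique prime $\mathfrak{p}_{0} \mid 2$ dividing $\mathfrak{b}$; writing $e$, $f$ for its ramification index and residue degree and $k_{0} := v_{\mathfrak{p}_{0}}(\mathfrak{b})$, we get $\mathcal{O}_{F}/\mathfrak{p}_{0}^{k_{0}} \cong \Z/2^{a}\Z$. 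Comparing cardinalities ($2^{fk_{0}} = 2^{a}$) and $\F_{2}$-ranks (that of $\mathcal{O}_{F}/\mathfrak{p}_{0}^{k_{0}}$ being $f\cdot\min(k_{0},e)$) forces $f = 1$, hence $k_{0} = a$, and $\min(a,e) = 1$, i.e.\ $a = 1$ or $e = 1$. Because $F/\Q$ is Galois, all primes above $2$ share the same $e$ and $f$; so $f = 1$, $2\mathcal{O}_{F} = (\mathfrak{p}_{1}\cdots\mathfrak{p}_{g})^{e}$ with $eg = n$, and $e$ is odd (as $e \mid n$). In particular $2$ is tamely ramified, $v_{\mathfrak{p}_{i}}(\mathfrak{d}_{F}) = e - 1$, and $k_{0} = 2v_{\mathfrak{p}_{0}}(\mathfrak{a}) + (e - 1)$.

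Now split on $e$. If $e \geq 3$, then $e$ is odd, so $\min(a,e) = 1$ forces $a = 1$, giving $2v_{\mathfrak{p}_{0}}(\mathfrak{a}) = 2 - e$, an odd integer — impossible. If $e = 1$, then $f = 1$ means $2$ splits completely, so $F \otimes_{\Q} \Q_{2} \cong \Q_{2}^{n}$ with trace form equal to the standard diagonal form; hence, as quadratic $\Z_{2}$-lattices, $\mathcal{O}_{F} \otimes \Z_{2} \cong \langle 1\rangle^{\oplus n}$ and $\mathfrak{a} \otimes \Z_{2} \cong \bigoplus_{i=1}^{n}\langle 2^{2v_{i}}\rangle$ with $v_{i} := v_{\mathfrak{p}_{i}}(\mathfrak{a}) \geq 0$. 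Since $L$ is even, $L \otimes \Z_{2}$ is even, forcing $v_{i} \geq 1$ for every $i$; then $v_{\mathfrak{p}_{i}}(\mathfrak{b}) = 2v_{i} \geq 2$ for all $n$ primes above $2$, so the $2$-part of $\mathcal{O}_{F}/\mathfrak{b}$ has $\F_{2}$-rank $n$, forcing $n = 1$ — but then $L$ must be $A_{1} = \langle 2\rangle$ while $(\mathfrak{a}, \Tr|_{\mathfrak{a}\times\mathfrak{a}}) \cong \langle q^{2}\rangle$ for some $q \in \Q^{\times}$, which is impossible. In every case we reach a contradiction, so no such $\mathfrak{a}$ exists.

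I expect the split case $e = 1$ to be the crux. The discriminant-group bookkeeping alone does not settle it: for instance $\mathfrak{a} = \mathfrak{p}_{0}$ already produces discriminant group $\Z/4\Z$, matching $D_{n}$. What closes the gap is the $2$-adic parity input that the trace form of $\mathcal{O}_{F}$ localizes to the odd form $\langle 1\rangle^{\oplus n}$ when $2$ splits completely, which together with the evenness of $L$ forces $\mathfrak{a}$ to lie in $2\mathcal{O}_{F}$ locally at $2$ and thereby pushes the $2$-rank of $L^{*}/L$ above $1$. The Galois hypothesis is used precisely to guarantee the uniform splitting type of $2$ — hence the clean dichotomy ``$e$ odd, $\geq 3$'' versus ``$2$ splits completely'' — and this is exactly what is unavailable in the sub-$\Z$-module setting studied later in the paper.
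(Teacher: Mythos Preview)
Your proof is correct and follows essentially the same arc as the paper's: identify $L^{*}/L$ with $\mathcal{O}_{F}/\mathfrak{a}^{2}\mathfrak{d}_{F}$, use cyclicity of the $2$-part to pin down the splitting behaviour of $2$, and then exploit evenness of the root lattice to reach a contradiction.

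The packaging differs in two minor but instructive ways. First, the paper isolates the fact that $\mathfrak{d}_{F}$ is globally a square (their \cref{square_discriminant}) and uses it to force the exponent $v$ of $\mathfrak{p}$ in $\mathfrak{b}$ to be even, hence $\geq 2$, hence $e=f=1$ in one stroke; you instead split on $e$ and dispatch $e\geq 3$ via the tame-different formula $v_{\mathfrak{p}}(\mathfrak{d}_{F})=e-1$, which is exactly the $2$-local content of their lemma. Second, for the endgame the paper states a more general \cref{main_Galois} (for any prime $p$: if the $p$-part of $\Lambda^{*}/\Lambda$ is nonzero cyclic and $\mathfrak{d}_{F}$ is a square, then some $\Tr(x^{2})\notin p\Z$) and proves it by an explicit CRT construction of such an $x$; your $\Z_{2}$-diagonalization $\mathfrak{a}\otimes\Z_{2}\cong\bigoplus\langle 2^{2v_{i}}\rangle$ and the observation that evenness forces every $v_{i}\geq 1$ is the same argument in lattice-theoretic clothing. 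What the paper's formulation buys is a statement valid at every prime $p$; what yours buys is a self-contained argument at $p=2$ that avoids invoking the global square-different lemma separately.
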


\begin{remark}
In the literature,
some results similar to \cref{main_ADE} are stated.
For example,
in \cite[Proposition 4.7]{Jorge-Costa},
a similar result for $D_{n}$ (with additional twisting parameters) is stated.
However, since the proof of the key lemma \cite[Lemma 4.6]{Jorge-Costa} depends on
an implicit condition that the ring of algebraic integers in $F$ is monogenic
(see e.g.\ \cite{Kashio-Sekigawa} and the references therein),
the proof of \cite[Proposition 4.7]{Jorge-Costa} does not work
at least for general $F$.
Similar errors occur reproductively in
e.g.\ \cite[Propositions 4.7 and 4.10]{JdACS} and \cite[Propositions 3.7 and 4.9]{de Araujo-Jorge}.
\end{remark}

On \cref{motivation} (2),
there are several known results.
For example,
for any integer $n \geq 3$ and any prime number $p$ such that $p \equiv 1 \bmod{n}$,
de Araujo and Jorge \cite{de Araujo-Jorge}
constructed sub $\Z$-modules of type $D_{n}$ in the unique subfield of $\Q(\zeta_{p})$ of degree $n$.
The second main result of this paper gives a novel construction for $D_{3} = A_{3}$,
which generalizes and refines the above result \cite{de Araujo-Jorge} for $n = 3$ as follows.

\begin{theorem} \label{main_cubic}
Let $F/\Q$ be a cyclic cubic extension.
Then, there exist infinitely many distinct lattices $\Lambda \subset F$ of type $A_{3}$
which have normal $\Z$-bases.
\end{theorem}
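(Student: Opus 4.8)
Fix a generator $\sigma$ of $\Gal(F/\Q)$ and, for $x\in F$, put $\Lambda_{x}:=\Z x+\Z\sigma(x)+\Z\sigma^{2}(x)$. The plan is first to turn the condition that $\Lambda_{x}$ be of type $A_{3}$ with a normal $\Z$-basis into two numerical conditions on $x$, and then to produce infinitely many $x$ satisfying them. Since the trace form is $\Gal(F/\Q)$-invariant, the Gram matrix of the restricted trace form with respect to $x,\sigma(x),\sigma^{2}(x)$ is the symmetric circulant matrix with diagonal entries $c_{0}:=\Tr(x^{2})$ and all off-diagonal entries equal to $c_{1}:=\Tr(x\sigma(x))$; here $c_{1}$ is the second elementary symmetric function of $x,\sigma(x),\sigma^{2}(x)$, so that $c_{0}=\Tr(x)^{2}-2c_{1}$. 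This matrix has determinant $(c_{0}-c_{1})^{2}(c_{0}+2c_{1})$; since the trace form of the totally real field $F$ is positive definite, a short case analysis shows that the only integral positive-definite matrix of determinant $4$ of this shape has $(c_{0},c_{1})=(2,1)$, and conversely the $3\times3$ matrix with diagonal entries $2$ and off-diagonal entries $1$ is a Gram matrix of $A_{3}$ (it is even, has determinant $4$, and is spanned by its vectors of norm $2$, hence is a root lattice of rank $3$; having $12$ roots, it is $A_{3}$). Therefore $\Lambda_{x}$ is a lattice of type $A_{3}$ with normal $\Z$-basis $\{x,\sigma(x),\sigma^{2}(x)\}$ if and only if $\Tr(x)=\pm2$ and $\Tr(x^{2})=2$; and because $\Lambda_{-x}=\Lambda_{x}$, it suffices to exhibit infinitely many $x\in F$ with $\Tr(x)=2$ and $\Tr(x^{2})=2$.

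Such $x$ lie on the affine plane $A:=\{x\in F:\Tr(x)=2\}$ inside $F$ (viewed as a $3$-dimensional affine space over $\Q$), on which $\Tr(x^{2})=2$ defines a conic, in fact an ellipse because the trace form is positive definite. The next step is to exhibit one rational point on it. Writing $F=\Q(\alpha)$ with $\alpha$ a root of a depressed cubic $T^{3}+pT+q$ (so that $p\neq0$, since otherwise the splitting field of $T^{3}+q$ would not be cyclic of degree $3$), I would take $x_{0}:=-\alpha^{2}/p$. Newton's identities give $\Tr(\alpha^{2})=-2p$ and $\Tr(\alpha^{4})=2p^{2}$, from which $\Tr(x_{0})=2$ and the second symmetric function of the conjugates of $x_{0}$ is $1$, so $\Tr(x_{0}^{2})=\Tr(x_{0})^{2}-2=2$; thus $x_{0}$ is a rational point of the ellipse.

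To pass from one rational point to infinitely many, translate by $x_{0}$: with $x=x_{0}+y$ and $y$ ranging over the trace-zero plane $\ker(\Tr)\cong\Q^{2}$, the equation becomes $\Tr(y^{2})+2\Tr(x_{0}y)=0$. Its leading form $\Tr(y^{2})$ is positive definite and its linear part is not identically zero, since $x_{0}\notin\Q$, so the projective closure is a smooth plane conic over $\Q$; moreover it has no point at infinity, because $\Tr(y^{2})=0$ has no nonzero real solution. Having the rational point $y=0$, it therefore has infinitely many rational points by the usual pencil-of-lines parametrization, all of them genuine elements $y\in\ker(\Tr)$, and these produce infinitely many $x\in F$ with $\Tr(x)=2$ and $\Tr(x^{2})=2$. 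Finally, these infinitely many $x$ give infinitely many \emph{distinct} lattices: each such $x$ has $\Tr(x^{2})=2$, i.e.\ is a vector of norm $2$ in $\Lambda_{x}$, hence one of the twelve roots of $\Lambda_{x}\cong A_{3}$, so at most twelve of our $x$'s can produce any single lattice, and an infinite supply of $x$'s forces an infinite supply of lattices.

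The first and last paragraphs are routine. The step I expect to be the real obstacle is the middle one: the ellipse cut out on the plane $\Tr(x)=2$ could a priori have no rational point at all, and the content of the argument is the explicit identity exhibiting $x_{0}=-\alpha^{2}/p$ as such a point. Once a single rational point is available, the remaining passages — to infinitely many points and then to infinitely many distinct lattices — are formal.
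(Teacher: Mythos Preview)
Your proof is correct. Both you and the paper reduce ``$\Lambda_x$ is of type $A_3$ with normal $\Z$-basis'' to the pair of conditions $(\Tr(x^2),\Tr(x\sigma(x)))=(2,1)$, view this locus as a conic, exhibit one rational point, and then parametrize; but the implementations diverge.

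The paper works throughout with a Shanks polynomial $f_t(X)=X^3-tX^2-(t+3)X-1$ (whose genericity for cyclic cubics requires an appendix), expresses every candidate in the normal $\Q$-basis given by the conjugates of a root $\epsilon$, and through a computational lemma reduces to the auxiliary conic $X^2+3Y^2=\delta_t$ with $\delta_t=t^2+3t+9$, on which $(t+\tfrac32,\tfrac32)$ is the visible base point. This buys fully explicit formulas in $t$ and a slope parameter, matching the paper's emphasis on an explicitly parametrized family. Your route sidesteps the Shanks apparatus entirely: picking any depressed generator $\alpha$ with minimal polynomial $T^3+pT+q$, Newton's identities immediately produce the base point $x_0=-\alpha^2/p$ on the conic sitting inside $F$ itself. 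Your distinctness argument is also different and tighter: the paper tracks denominators of a coordinate $\lambda_0$ to force infinitely many lattices, whereas you observe that each $x$ with $\Tr(x^2)=2$ is one of the twelve roots of its $A_3$, so at most twelve $x$'s yield any given lattice.

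One small point left implicit: that $x,\sigma(x),\sigma^2(x)$ are $\Q$-independent, so that $\Lambda_x$ really has rank $3$; but this follows at once from the nonsingularity of the Gram matrix you compute.
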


It may be well-known for experts that
every cyclic cubic field $F$ contains such a lattice (see \cref{self-dual}).
It is a novelty of our construction, however, that
we give an \emph{infinite family} of lattices of $F$ of type $A_{3}$ which have normal $\Z$-bases
from a Shanks polynomial $f_{t}(x)$ (cf.\ \cite{Shanks}) whose minimal splitting field is (isomorphic to) $F$.
Moreover, such normal $\Z$-bases are parametrized by rational points on a certain conic explicitly by means of the parameter $t$ of the Shanks polynomial $f_{t}(x)$;
see the proof of \cref{main_cubic}.

The organization of this paper is as follows.
In \S2, we recall some basic definitions.
In \S3, we give a proof of \cref{main_ADE} in a more generalized form (\cref{main_Galois}).
After that,  we prove that the $A_{3}$-root lattice cannot be realized as a fractional ideal of any totally real number fields (see \cref{no_A3}).
In addition,
we give partial results on the non-existence of lattices of type $A_{n}, D_{n}, E_{n}$ in number fields, which are not necessarily fractional ideals.
In \S4, we give a proof of \cref{main_cubic} which is (essentially) constructive.
In \S5, we give a complementary result for quadratic fields mentioned above.
In \S6, we summarize the known facts on lattices of type $A_{n}$ for $n \leq 8$ which are contained in number fields of degree $n$.
In appendices,
we explain supplementary facts on cyclic cubic fields,
specifically the genericity of the Shanks polynomial
and some properties of ideal lattices sharing the discriminant group with the $A_{3}$-root lattice.

\subsection*{Acknowledgement}
The authors are sincerely grateful Prof.\ Hiroki Aoki for encouraging their research.
They also sincerely appreciate valuable discussions on cyclic cubic fields with Dr.\ Ryutaro Sekigawa.
They would like to thank 
Prof.\ Eva Bayer-Fl\"uckiger, Prof.\ Yuya Matsumoto, and Mr.\ Takato Suzuki
for their careful reading of the early manuscript and giving many valuable comments.
In particular,
they sincerely grateful
Prof.\ Bayer-Fl\"uckiger for pointing out that
a part of our results had already been obtained in her article \cite{BF-Lenstra} 
in more general and precise form.

\subsection*{Notation}
In this paper,
$\Z, \Z_{p}, \Q, \R, \C$ denote the ring of integers,
the ring of $p$-adic integers with a prime number $p$,
the field of rational numbers,
the field of real numbers,
and the field of complex numbers,
respectively.
We denote the complex conjugate on $\C$ by $\bar{\cdot} : \C \to \C$.
For every integer $n \neq 0$,
set $\zeta_{n} := \exp(2\pi\sqrt{-1}/n) \in \C$.

A subfield $F$ of $\C$ is called a number field if its degree $[F : \Q]$ over $\Q$ is finite.
For every number field $F$,
$\Z_{F}$ denotes the ring of integers in $F$,
that is,
the subring of $F$ consisting of the roots of the monic polynomials with coefficients in $\Z$.
A finitely generated $\Z_{F}$-submodule of $\Z_{F}$ (resp.\ of $F$) is called 
an ideal of $\Z_{F}$ (resp.\ a fractional ideal of $F$).
Since $\Z_{F}$ is a Dedekind domain,
the non-zero ideals of $\Z_{F}$ form a free abelian group,
and hence every non-zero fractional ideal $\Lambda$ of $F$ can be decomposed into
a product $\Lambda = \Lambda_{+}\Lambda_{-}^{-1}$ uniquely with coprime ideals $\Lambda_{+}, \Lambda_{-}$ of $\Z_{F}$
(see e.g.\ \cite{Samuel}).


\section{Basic definitions}

First, we recall some basic definitions.
For details, see e.g.\ \cite[Ch.\ 1]{Ebeling}.

\begin{definition}
Let $\Lambda$ be a free $\Z$-module of rank $n$
and $\langle\ ,\ \rangle : \Lambda \times \Lambda \to \R$ be a bilinear form.
\begin{enumerate}
\item
A pair $\Lambda = (\Lambda, \langle\ ,\ \rangle)$ is called a lattice
if $\langle\ ,\ \rangle$ is positive-definite and symmetric,
that is,
$\langle x, x \rangle > 0$ for every $x \in \Lambda \setminus \{ 0 \}$
and $\langle x, y \rangle = \langle y, x \rangle$ for every $x, y \in \Lambda$.

\item
A lattice $\Lambda$ is called integral 
if $\langle x, y \rangle \in \Z$ for every $x, y \in \Lambda$.

\item
An integral lattice $\Lambda$ is called even
if $\langle x, x \rangle \in 2\Z$ for every $x \in \Lambda$,
and odd otherwise.


\item
A lattice $\Lambda$ is called of type $A_{n}$
if $\Lambda$ has a basis $(e_{1}, \dots, e_{n})$ such that
\[
	\langle e_{i}, e_{j} \rangle
	= \begin{cases}
	2 & \text{if $\lvert j - i \rvert = 0$, i.e., $j = i$}, \\
	-1 & \text{if $\lvert j - i \rvert = 1$}, \\
	0 & \text{otherwise}.
	\end{cases}
\]

\item
A lattice $\Lambda$ is called of type $D_{n}$ ($n \geq 4$)
if $\Lambda$ has a basis $(e_{1}, \dots, e_{n})$ such that
\[
	\langle e_{i}, e_{j} \rangle
	= \begin{cases}
	2 & \text{if $\lvert j - i \rvert = 0$, i.e., $j = i$}, \\
	-1 & \text{if ($\lvert j - i \rvert = 1$ and $\{ i, j \} \neq \{ 1, 2 \}$) or $\{ i, j \} = \{ 1, 3 \}$}, \\
	0 & \text{otherwise}.
	\end{cases}
\]

\item
A lattice $\Lambda$ is called of type $E_{n}$ ($n = 6, 7, 8$)
if $\Lambda$ has a basis $(e_{1}, \dots, e_{n})$ such that
\[
	\langle e_{i}, e_{j} \rangle
	= \begin{cases}
	2 & \text{if $\lvert j - i \rvert = 0$, i.e., $j = i$}, \\
	-1 & \text{if ($\lvert j - i \rvert = 1$ and $\{ i, j \} \neq \{ 1, 2 \}$) or $\{ i, j \} = \{ 1, 4 \}$}, \\
	0 & \text{otherwise}.
	\end{cases}
\]

\item
For a lattice $\Lambda$ of rank $n$,
its dual lattice $\Lambda^{*}$ is defined by
\[
	\Lambda^{*} := \left\{ x \in \Lambda \otimes_{\Z} \R \setmid \langle x, y \rangle \in \Z \ \text{for every} \ y \in \Lambda \right\},
\]
where we extend $\langle\ ,\ \rangle$ $\R$-bilinearly to $(\Lambda \otimes_{\Z} \R) \times (\Lambda \otimes_{\Z} \R)$.
\end{enumerate}
\end{definition}

Note that a lattice $\Lambda$ is integral if and only if $\Lambda \subset \Lambda^{*}$.
We are interested in integral lattices that arise as sub $\Z$-modules of certain number fields.

\begin{definition}
Let $F$ be a number field.
\begin{enumerate}
\item
$F$ is called totally real if
every field homomorphism $F \to \C$ has the image in $\R$.

\item
$F$ is called CM (complex multiplication)  if $F$ has no field homomorphisms $F \to \R$
and $F$ has a totally real subfield $F^{+}$ such that $[F : F^{+}] = 2$.
\end{enumerate}
\end{definition}

If $F$ is a CM number field,
then $F/F^{+}$ is a Galois extension whose Galois group $\Gal(F/F^{+})$ is generated by the complex conjugate.

\begin{lemma}
Suppose that $F$ is a totally real or CM number field.
Then, the bilinear form
\[
	\Tr = \Tr_{F} : F \times F \to \Q; (x, y) \mapsto \Tr_{F/\Q}(x\bar{y})
\]
is positive-definite and symmetric.
In particular,
for every sub $\Z$-module $\Lambda$ of $F$,
the pair $(\Lambda, \Tr|_{\Lambda \times \Lambda})$ is a lattice.
\end{lemma}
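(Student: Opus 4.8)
The plan is to pass to the embedding description of the trace. Let $\sigma_{1},\dots,\sigma_{n}\colon F\to\C$ be the $n=[F:\Q]$ distinct field homomorphisms, so that $\Tr_{F/\Q}(a)=\sum_{i=1}^{n}\sigma_{i}(a)$ for all $a\in F$. I will also use that complex conjugation restricts to a ring automorphism $\tau$ of $F$: this is clear when $F$ is totally real (then $F\subset\R$ and $\tau=\mathrm{id}$), and when $F$ is CM it is the nontrivial element of $\Gal(F/F^{+})$, which is the fact recalled just above the lemma. In particular $x\bar{y}\in F$, so $\Tr(x,y)=\Tr_{F/\Q}(x\bar{y})$ indeed lies in $\Q$.

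Symmetry is then formal. Since $\Tr_{F/\Q}\circ\tau=\Tr_{F/\Q}$ (because $\tau$ permutes the $\sigma_{i}$) and $\overline{x\bar{y}}=\bar{x}y=y\bar{x}$, we get $\Tr_{F/\Q}(x\bar{y})=\Tr_{F/\Q}(\overline{x\bar{y}})=\Tr_{F/\Q}(y\bar{x})$, i.e.\ $\Tr(x,y)=\Tr(y,x)$.

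For positive-definiteness I would write $\Tr(x,x)=\sum_{i=1}^{n}\sigma_{i}(x)\sigma_{i}(\bar{x})$ and show each factor $\sigma_{i}(\bar{x})$ equals $\overline{\sigma_{i}(x)}$, so that $\Tr(x,x)=\sum_{i}\lvert\sigma_{i}(x)\rvert^{2}\ge 0$, with equality only when every $\sigma_{i}(x)=0$, hence only when $x=0$ since each $\sigma_{i}$ is injective. The identity $\sigma_{i}(\bar{x})=\overline{\sigma_{i}(x)}$ is where the hypothesis on $F$ is genuinely used, and this is the only real step. In the totally real case it is trivial ($\sigma_{i}(F)\subset\R$ and $\bar{x}=x$). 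In the CM case, fix $\sigma=\sigma_{i}$ and let $c$ denote complex conjugation on $\C$; both $\sigma\circ\tau$ and $c\circ\sigma$ are embeddings of $F$ that agree with $\sigma$ on $F^{+}$ (the former by definition of $\tau$, the latter because $F^{+}$ is totally real, so $\sigma(F^{+})\subset\R$). As $[F:F^{+}]=2$ there are exactly two such extensions, whence $c\circ\sigma\in\{\sigma,\sigma\circ\tau\}$; the option $c\circ\sigma=\sigma$ would force $\sigma(F)\subset\R$, contradicting that $F$ is CM. Therefore $c\circ\sigma=\sigma\circ\tau$, which is precisely $\overline{\sigma(x)}=\sigma(\bar{x})$.

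Finally, for the last assertion: any sub $\Z$-module $\Lambda\subset F$ is torsion-free, hence (being finitely generated, e.g.\ a fractional ideal) free of finite rank, and the restriction of the positive-definite symmetric $\Q$-valued — in particular $\R$-valued — form $\Tr$ is again positive-definite and symmetric, so $(\Lambda,\Tr|_{\Lambda\times\Lambda})$ meets the definition of a lattice. The main obstacle is the small compatibility check in the CM case above; everything else is routine bookkeeping with $\Tr_{F/\Q}=\sum_{i}\sigma_{i}$.
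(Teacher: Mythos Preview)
Your proof is correct and, for symmetry, essentially coincides with the paper's: both arguments boil down to the invariance of $\Tr_{F/\Q}$ under the involution $\tau$ (you say $\tau$ permutes the embeddings; the paper writes the same fact via the tower $\Tr_{F/\Q}=\Tr_{F^{+}/\Q}\circ\Tr_{F/F^{+}}$ and $\Tr_{F/F^{+}}(a)=a+\tau(a)$). The paper declares positive-definiteness to be trivial and omits it; your embedding computation $\Tr(x,x)=\sum_{i}\lvert\sigma_{i}(x)\rvert^{2}$ together with the compatibility $c\circ\sigma=\sigma\circ\tau$ is the standard justification and is carried out cleanly. One small quibble in your final paragraph: not every sub $\Z$-module of $F$ is finitely generated (e.g.\ $\Q$ itself), so ``being finitely generated'' is an assumption rather than a consequence---but the paper's statement already carries this implicit hypothesis, so this is a defect of the lemma's phrasing, not of your argument.
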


\begin{proof}
The only non-trivial statement is the symmetry of $\Tr$ for a CM number field $F$,
which we can check as follows:
\[
	\Tr_{F/\Q}(y\bar{x})
	= \Tr_{F^{+}/\Q}(\Tr_{F/F^{+}}(y\bar{x}))
	= \Tr_{F^{+}/\Q}(y\bar{x}+x\bar{y})
	= \Tr_{F^{+}/\Q}(\Tr_{F/F^{+}}(x\bar{y}))
	= \Tr_{F/\Q}(x\bar{y}).
\]
\end{proof}

In what follows,
we abbreviate $\Tr|_{\Lambda \times \Lambda}$ to $\Tr$.

The following fact is classically known and a starting point of our study.

\begin{theorem} [{cf.\ \cite[p.\ 76]{BF_Hirzebruch70} or \cite[Lemma 5.4]{Ebeling}}] \label{cyclotomic}
Let $p$ be an odd prime number
and $\Lambda$ be the fractional ideal of $\Q(\zeta_{p})$ generated by $(1-\zeta_{p})^{-(p-3)/2}$.
Then, the lattice $(\Lambda, \Tr)$ is of type $A_{p-1}$.
\end{theorem}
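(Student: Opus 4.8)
The plan is to produce an explicit isometry between $(\Lambda,\Tr)$ and the standard realization of $A_{p-1}$ sitting inside $\Q(\zeta_{p})$. Write $\zeta:=\zeta_{p}$, $K:=\Q(\zeta)$, $K^{+}:=\Q(\zeta+\zeta^{-1})$ (the maximal totally real subfield, so that $\overline{\zeta}=\zeta^{-1}$), and let $\mathfrak{p}:=(1-\zeta)\Z_{K}$ be the prime of $K$ above $p$; then $\Lambda=\mathfrak{p}^{-(p-3)/2}$.

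First I would check that $(\mathfrak{p},\tfrac{1}{p}\Tr)$ is of type $A_{p-1}$. The elements $(1-\zeta)\zeta^{j}$ with $0\le j\le p-2$ form a $\Z$-basis of $\mathfrak{p}$, and since $\Tr_{K/\Q}(\zeta^{k})$ equals $p-1$ when $p\mid k$ and $-1$ otherwise, a direct computation gives, for $0\le i,j\le p-2$,
\[
	\tfrac{1}{p}\Tr_{K/\Q}\bigl((1-\zeta)\zeta^{i}\cdot\overline{(1-\zeta)\zeta^{j}}\bigr)
	=\tfrac{1}{p}\Tr_{K/\Q}\bigl((2-\zeta-\zeta^{-1})\,\zeta^{i-j}\bigr)
	=\begin{cases}2&\text{if }i=j,\\-1&\text{if }\lvert i-j\rvert=1,\\0&\text{otherwise},\end{cases}
\]
which is exactly the Gram matrix appearing in the definition of type $A_{p-1}$.

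Next I would transport $(\Lambda,\Tr)$ to $(\mathfrak{p},\tfrac{1}{p}\Tr)$ by multiplication by a suitable element. For $\delta\in K^{\times}$ one has $\tfrac{1}{p}\Tr(\delta x\cdot\overline{\delta y})=\tfrac{1}{p}\Tr(\delta\overline{\delta}\cdot x\overline{y})$, so multiplication by $\delta$ carries $(\Lambda,\Tr)$ isometrically onto $(\mathfrak{p},\tfrac{1}{p}\Tr)$ as soon as $\delta\Lambda=\mathfrak{p}$ and $\delta\overline{\delta}=p$, i.e.\ $(\delta)=\mathfrak{p}^{(p-1)/2}$ and $\delta\overline{\delta}=p$. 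I would construct such a $\delta$ via the quadratic Gauss sum. Put $\sigma:=\zeta^{(p+1)/2}$, which is again a primitive $p$-th root of unity with $\sigma^{2}=\zeta$, and set
\[
	\delta_{0}:=\prod_{a=1}^{(p-1)/2}\frac{\sigma^{a}-\sigma^{-a}}{\sigma-\sigma^{-1}},\qquad
	\delta:=\delta_{0}\,(1-\zeta)^{(p-1)/2}.
\]
Each factor of $\delta_{0}$ equals $\sigma^{1-a}(1+\zeta+\dots+\zeta^{a-1})$, a root of unity times a cyclotomic unit, so $\delta_{0}\in\Z_{K}^{\times}$; and $\overline{\sigma^{a}-\sigma^{-a}}=-(\sigma^{a}-\sigma^{-a})$ forces $\overline{\delta_{0}}=\delta_{0}$, hence $\delta_{0}\in\Z_{K^{+}}^{\times}$. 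Furthermore, pairing $a$ with $p-a$ in $\prod_{a=1}^{p-1}(1-\zeta^{a})=p$ gives $p=\prod_{a=1}^{(p-1)/2}(2-\zeta^{a}-\zeta^{-a})$, and combining this with $2-\zeta^{a}-\zeta^{-a}=-(\sigma^{a}-\sigma^{-a})^{2}$ and $(1-\zeta)(1-\zeta^{-1})=2-\zeta-\zeta^{-1}=-(\sigma-\sigma^{-1})^{2}$ yields
\[
	\delta_{0}^{2}=\frac{p}{\bigl((1-\zeta)(1-\zeta^{-1})\bigr)^{(p-1)/2}}.
\]
Consequently $(\delta)=\mathfrak{p}^{(p-1)/2}$ and $\delta\overline{\delta}=\delta_{0}^{2}\bigl((1-\zeta)(1-\zeta^{-1})\bigr)^{(p-1)/2}=p$, so $x\mapsto\delta x$ is the desired isometry, and together with the first step $(\Lambda,\Tr)$ is of type $A_{p-1}$.

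The routine parts are the trace computation of the first step and the cyclotomic-unit identities of the second. The one substantive point is the existence of $\delta$, i.e.\ that $p/\bigl((1-\zeta)(1-\zeta^{-1})\bigr)^{(p-1)/2}$ is a square in $\Z_{K^{+}}^{\times}$ (equivalently, that the principal ideal $\mathfrak{p}^{(p-1)/2}$ has a generator of relative norm $p$ to $K^{+}$); the explicit $\delta_{0}$ above settles it directly. Alternatively one could observe that this element is a totally positive unit of $\Z_{K^{+}}$ and appeal to the classical facts that $\Z_{K}^{\times}=\mu_{2p}\cdot\Z_{K^{+}}^{\times}$ and that every totally positive unit of $\Z_{K^{+}}$ is a square, but the Gauss-sum construction keeps the argument self-contained.
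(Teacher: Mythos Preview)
Your argument is correct. The trace computation in the first step is exactly right, and your construction of $\delta$ via the square root $\delta_{0}$ of $p/\bigl((1-\zeta)(1-\zeta^{-1})\bigr)^{(p-1)/2}$ is clean; the key sign cancellation $(-1)^{(p-1)/2}$ appearing both in $p=(-1)^{(p-1)/2}\bigl(\prod_{a}(\sigma^{a}-\sigma^{-a})\bigr)^{2}$ and in $\bigl((1-\zeta)(1-\zeta^{-1})\bigr)^{(p-1)/2}=(-1)^{(p-1)/2}(\sigma-\sigma^{-1})^{p-1}$ is handled correctly.

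As for the comparison: the paper does not give its own proof of this theorem at all---it simply records the statement and cites \cite{BF_Hirzebruch70} and \cite[Lemma~5.4]{Ebeling}. The argument in those references proceeds by directly exhibiting a $\Z$-basis of $\Lambda$ and computing its Gram matrix, which amounts to your Step~1 together with an implicit rescaling. Your two-step version (first identify $(\mathfrak{p},\tfrac{1}{p}\Tr)$ as $A_{p-1}$, then transport via an explicit $\delta$ with $\delta\overline{\delta}=p$) makes the role of the Gauss-sum square root more visible and keeps the trace computation at the simplest ideal $\mathfrak{p}$; the cited approach is shorter but hides this structure. Either way the content is the same.
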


This fact leads us naturally to \cref{motivation} as mentioned in introduction.
In the next section,
we give a proof of \cref{main_ADE},
which gives a negative answer to \cref{motivation} (1)
for every Galois extension of odd degree.

\section{Non-existence of lattices of type $A_{n}, D_{n}, E_{n}$}
\subsection{Different ideals}

Before the proof of \cref{main_ADE},
we recall some basic facts from \cite[Ch.\ III]{Serre_LF} in our setting.

Let $F$ be a number field.
Then, its different ideal $\mathfrak{D}_{F} = \mathfrak{D}_{\Z_{F}/\Z}$ is defined by the equality
\[
	\mathfrak{D}_{F}^{-1}
	= (\Z_{F}, \Tr)^{*}
	:= \left\{ x \in F \setmid \Tr(xy) \in \Z \ \text{for every} \ y \in \Z_{F} \right\}.
\]
Since the lattice $(\Z_{F}, \Tr)$ is integral,
we have $\Z_{F} \subset \mathfrak{D}_{F}^{-1}$.
Hence, $\mathfrak{D}_{F} \subset \Z_{F}$.
The different ideal is pivotal in the calculation of the dual lattices of fractional ideals.

\begin{lemma} [{cf.\ \cite[Proposition 7, Ch.\ III, \S3]{Serre_LF}}] \label{dual_computation}
Let $F$ be a number field and $\Lambda$ be a fractional ideal of $F$.
Then, the following conditions are equivalent.
\begin{enumerate}
\item
$\Tr(\Lambda) \subset \Z$.

\item
$\Lambda \subset \mathfrak{D}_{F}^{-1}$,
i.e., $\Tr(\Lambda y) \subset \Z$ for every $y \in \Z_{F}$.
\end{enumerate}
In particular,
if $\Lambda$ is non-zero,
then the following equality holds.
\[
	\Lambda^{*} = \Lambda^{-1}\mathfrak{D}_{F}^{-1}.
\]
\end{lemma}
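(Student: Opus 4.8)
The plan is to obtain everything by unwinding definitions, the one structural input being that a fractional ideal $\Lambda$ is a $\Z_{F}$-module, hence stable under multiplication by elements of $\Z_{F}$. For the equivalence of (1) and (2) I would argue as follows. The implication (2)$\Rightarrow$(1) is immediate: take $y=1\in\Z_{F}$. For (1)$\Rightarrow$(2), since $\Lambda y\subseteq\Lambda$ for every $y\in\Z_{F}$, we get $\Tr(\Lambda y)\subseteq\Tr(\Lambda)\subseteq\Z$, and by the definition of the inverse different $\mathfrak{D}_{F}^{-1}=(\Z_{F},\Tr)^{*}$ this is precisely the assertion $\Lambda\subseteq\mathfrak{D}_{F}^{-1}$. (The ``i.e.'' inside (2) is just the expansion of that inclusion.)

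Next, before stating the formula for $\Lambda^{*}$, I would record that $\Lambda^{*}\subseteq F$ when $\Lambda\neq 0$, so that comparing it with the fractional ideal $\Lambda^{-1}\mathfrak{D}_{F}^{-1}$ makes sense. For this, note that $\Lambda$ is a full $\Z$-lattice in $\Lambda\otimes_{\Z}\R=F\otimes_{\Q}\R$ and that $\Tr$ is nondegenerate there (positive-definiteness already gives this); if $(e_{1},\dots,e_{n})$ is a $\Z$-basis of $\Lambda$, its Gram matrix $G=(\Tr(e_{i}\bar{e}_{j}))_{i,j}$ lies in $\GL_{n}(\Q)$, so the dual $\Z$-basis of $\Lambda^{*}$, whose coordinate matrix with respect to $(e_{i})$ is $G^{-1}$, consists of elements of $F$. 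Hence $\Lambda^{*}\subseteq F$.

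Finally, for the identity itself, the cleanest route is to apply the already-proved equivalence (1)$\Leftrightarrow$(2) to the auxiliary fractional ideal $x\Lambda$: for $x\in F^{\times}$ we have $x\in\Lambda^{*}$ iff $\Tr(x\Lambda)\subseteq\Z$, which by the equivalence applied to $x\Lambda$ holds iff $x\Lambda\subseteq\mathfrak{D}_{F}^{-1}$, and multiplying by the fractional ideal $\Lambda^{-1}$ this is equivalent to $x\in\Lambda^{-1}\mathfrak{D}_{F}^{-1}$; the case $x=0$ is trivial. This gives $\Lambda^{*}=\Lambda^{-1}\mathfrak{D}_{F}^{-1}$. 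Alternatively, after the reduction to $x\Lambda$ one may simply cite \cite[Proposition 7, Ch.\ III, \S3]{Serre_LF}.

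I do not expect a genuine obstacle: the argument is a bookkeeping exercise in the group of fractional ideals of the Dedekind domain $\Z_{F}$ together with the defining property $\Tr(\mathfrak{D}_{F}^{-1})\subseteq\Z$. The only point deserving a line of care is the claim $\Lambda^{*}\subseteq F$ (as opposed to merely $\Lambda^{*}\subseteq F\otimes_{\Q}\R$), which is what makes the stated equality an identity of fractional ideals rather than of $\R$-lattices; this is handled by the nondegeneracy/rational-Gram-matrix remark above.
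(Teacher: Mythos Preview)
Your proof is correct and follows essentially the same route as the paper's. Both obtain (1)$\Leftrightarrow$(2) from the $\Z_{F}$-module property $\Lambda y\subseteq\Lambda$ and the specialization $y=1$; for the identity $\Lambda^{*}=\Lambda^{-1}\mathfrak{D}_{F}^{-1}$, the paper phrases it as ``$\Lambda^{*}$ is the largest fractional ideal with $\Lambda\Lambda^{*}\subset\mathfrak{D}_{F}^{-1}$'', whereas you argue element-wise via $x\Lambda$, which is the same computation unpacked, with the added (and welcome) justification that $\Lambda^{*}\subset F$ via the rational Gram matrix.
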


\begin{proof}
The implication (1)$\Rightarrow$(2) follows from the assumption that $\Lambda$ is a fractional ideal of $F$,
especially $\Lambda y \subset \Lambda$ for every $y \in \Z_{F}$.
The other implication (2)$\Rightarrow$(1) follows from the case $y = 1$.
For the rest of the assertion,
note that $\Lambda^{*}$ is characterized as the maximum among fractional ideals of $F$ such that $\Tr(\Lambda\Lambda^{*}) \subset \Z$,
i.e., $\Lambda\Lambda^{*} \subset \mathfrak{D}_{F}^{-1}$.
This means that $\Lambda^{*} = \Lambda^{-1}\mathfrak{D}_{F}^{-1}$ as claimed.
\end{proof}

The different ideal is also important to describe the factorization of prime numbers in number fields.
One of the most primitive applications is the following criterion for ramification due to Dedekind.

\begin{lemma} [{cf.\ \cite[Theorem 1, Ch.\ III]{Serre_LF}}] \label{Dedekind}
Let $F$ be a number field and $p$ be a prime number.
Then, $p$ is ramified in $F/\Q$ if and only if $p \mid N(\mathfrak{D}_{F})$,
where $N(\mathfrak{D}_{F}) := \#(\Z_{F}/\mathfrak{D}_{F})$.
\end{lemma}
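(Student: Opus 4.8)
The plan is to rewrite the divisibility $p \mid N(\mathfrak{D}_{F})$ as the degeneracy of the trace form of $F$ modulo $p$, and then read ramification off from the structure of $\Z_{F}/p\Z_{F}$ as an $\F_{p}$-algebra. First I would fix a $\Z$-basis $(\omega_{1},\dots,\omega_{n})$ of $\Z_{F}$ and form the Gram matrix $G := \left(\Tr_{F/\Q}(\omega_{i}\omega_{j})\right)_{i,j} \in M_{n}(\Z)$, so that $\det G = \disc(F) \neq 0$ (as $F/\Q$ is separable). By the definition of $\mathfrak{D}_{F}$ recalled above, $\mathfrak{D}_{F}^{-1} = \left\{x \in F \setmid \Tr_{F/\Q}(xy) \in \Z \ \text{for all} \ y \in \Z_{F}\right\}$ is exactly the $G$-orthogonal dual of $\Z_{F}$; the Smith normal form of $G$ then yields $[\mathfrak{D}_{F}^{-1} : \Z_{F}] = |\det G|$, while $[\mathfrak{D}_{F}^{-1} : \Z_{F}] = N(\mathfrak{D}_{F})$ by multiplicativity of ideal norms. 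Hence $N(\mathfrak{D}_{F}) = |\det G|$, and $p \mid N(\mathfrak{D}_{F})$ if and only if $\bar G \in M_{n}(\F_{p})$ is singular, i.e.\ the symmetric $\F_{p}$-bilinear form $(x,y) \mapsto \Tr_{A/\F_{p}}(xy)$ on $A := \Z_{F}/p\Z_{F}$ is degenerate (its Gram matrix in the basis $(\bar\omega_{i})$ being $\bar G$).

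Next I would analyze $A$. Writing $p\Z_{F} = \prod_{\mathfrak{p} \mid p} \mathfrak{p}^{e_{\mathfrak{p}}}$, the Chinese remainder theorem gives an isomorphism of $\F_{p}$-algebras $A \cong \prod_{\mathfrak{p} \mid p} \Z_{F}/\mathfrak{p}^{e_{\mathfrak{p}}}$, which is an orthogonal decomposition for the trace form. If $e_{\mathfrak{p}} = 1$ for every $\mathfrak{p} \mid p$, then each factor $\Z_{F}/\mathfrak{p}$ is a finite, hence separable, field extension of $\F_{p}$, whose trace form is nondegenerate; therefore the trace form of $A$ is nondegenerate and $p \nmid N(\mathfrak{D}_{F})$. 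Conversely, if $e_{\mathfrak{p}} \geq 2$ for some $\mathfrak{p} \mid p$, then in the local factor $R := \Z_{F}/\mathfrak{p}^{e_{\mathfrak{p}}}$ the maximal ideal $\mathfrak{m} := \mathfrak{p}/\mathfrak{p}^{e_{\mathfrak{p}}}$ is nonzero and nilpotent, so multiplication by any $m \in \mathfrak{m}$ is a nilpotent $\F_{p}$-endomorphism of $R$ and has trace $0$; since $\mathfrak{m}R \subseteq \mathfrak{m}$, the ideal $\mathfrak{m}$ lies in the radical of the trace form of $R$, which is thus degenerate, and so is the trace form of $A$. Hence $p \mid N(\mathfrak{D}_{F})$. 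Since $p$ ramifies in $F/\Q$ precisely when $e_{\mathfrak{p}} \geq 2$ for some $\mathfrak{p} \mid p$, this gives the claimed equivalence.

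The only mildly delicate point is the bookkeeping identity $N(\mathfrak{D}_{F}) = |\det G|$, that is, the different--discriminant relation; I would obtain it formally from $\mathfrak{D}_{F}^{-1} = (\Z_{F},\Tr_{F/\Q})^{*}$ together with the structure theorem for finitely generated $\Z$-modules, in the same spirit as the proof of \cref{dual_computation}. Everything else is elementary linear algebra over $\F_{p}$. An alternative, purely local proof would localize and complete at each $\mathfrak{p} \mid p$, use that $\Z_{F_{\mathfrak{p}}} = \Z_{p}[\alpha]$ is monogenic over $\Z_{p}$ (the residue extension being separable), write the local different as the ideal generated by $g'(\alpha)$ for $g$ the minimal polynomial of $\alpha$, and check that $v_{\mathfrak{p}}(g'(\alpha)) = 0$ exactly when $\bar g \in \F_{p}[x]$ is squarefree, i.e.\ when $e_{\mathfrak{p}} = 1$; or one could simply invoke \cite[Theorem 1, Ch.\ III]{Serre_LF}. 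I would favor the trace-form argument, as it is self-contained and---in line with the remark following \cref{main_ADE}---invokes no monogenicity hypothesis.
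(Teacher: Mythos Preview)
Your argument is correct. The paper itself gives no proof of this lemma --- it merely records the statement with a citation to \cite[Theorem~1, Ch.~III]{Serre_LF} --- so there is nothing to compare against; your trace-form approach (identifying $N(\mathfrak{D}_{F})$ with $|\det G|$ and reading ramification off from the degeneracy of the trace form on $\Z_{F}/p\Z_{F}$ via the Chinese remainder decomposition) is the classical argument and is essentially what one finds in Serre.
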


The next lemma is well-known and a key ingredient of the proof of \cref{main_ADE}.
We can understand it as a certain global obstruction for ideal lattices to be of type $A_{n}, D_{n}, E_{n}$.

\begin{lemma} \label{square_discriminant}
Let $F/\Q$ be a Galois extension of odd degree.
Then, $\mathfrak{D}_{F}$ is the square of an ideal of $\Z_{F}$.
\end{lemma}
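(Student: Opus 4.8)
The plan is to reduce the statement to a purely local computation of the exponents in the prime factorization of the different. Write $\mathfrak{D}_{F} = \prod_{\mathfrak{p}} \mathfrak{p}^{d(\mathfrak{p})}$, the product running over the nonzero prime ideals of $\Z_{F}$; then $\mathfrak{D}_{F}$ is the square of an ideal of $\Z_{F}$ if and only if every exponent $d(\mathfrak{p})$ is even, so it suffices to prove this for a single arbitrary $\mathfrak{p}$.

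First I would fix $\mathfrak{p}$ lying above a prime number $p$ and record the numerical constraint coming from the Galois hypothesis: if $e = e(\mathfrak{p}/p)$, $f = f(\mathfrak{p}/p)$, and $g$ denotes the number of primes of $\Z_{F}$ above $p$, then $efg = [F:\Q] = n$, so in particular $e \mid n$ and hence $e$ is odd. This is exactly where the Galois hypothesis enters, together with the fact, used below, that the completion $F_{\mathfrak{p}}/\Q_{p}$ is again Galois; odd degree alone is not enough, as already $\Q(\sqrt[3]{2})$ shows.

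Next I would pass to the completion and invoke the ramification-filtration formula for the different. Since $d(\mathfrak{p}) = v_{\mathfrak{p}}(\mathfrak{D}_{F_{\mathfrak{p}}/\Q_{p}})$ and, by multiplicativity of the different in towers, the maximal unramified subextension of $F_{\mathfrak{p}}/\Q_{p}$ contributes nothing, we may assume $F_{\mathfrak{p}}/\Q_{p}$ is totally ramified of odd degree $e$. Let $G_{0} \supseteq G_{1} \supseteq \cdots$ be its ramification groups in lower numbering, so that $G_{0} = \Gal(F_{\mathfrak{p}}/\Q_{p})$ has order $e$ and $G_{i} \subseteq G_{0}$ for every $i$; therefore $|G_{i}|$ divides $e$ and is in particular odd. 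By Serre's formula \cite[Proposition 4, Ch.\ IV]{Serre_LF},
\[
    d(\mathfrak{p}) = \sum_{i \geq 0} \bigl( |G_{i}| - 1 \bigr),
\]
and since every term $|G_{i}| - 1$ is even, so is $d(\mathfrak{p})$. This proves the claim.

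The routine steps (the factorization of $\mathfrak{D}_{F}$, multiplicativity of the different in towers, triviality of the different for unramified extensions) I would not spell out. The one point that deserves care — the main obstacle — is the wildly ramified case $p \mid e$: there one cannot simply use $d(\mathfrak{p}) = e-1$, and one genuinely needs the full ramification filtration. The argument nevertheless goes through because the wild inertia group $G_{1}$, and hence every $G_{i}$ with $i \geq 1$, is a $p$-subgroup of $G_{0}$, so all the groups $G_{i}$ have odd order; without the observation that each $|G_{i}|$ divides the odd number $e$ the parity conclusion would be unavailable.
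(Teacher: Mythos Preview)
Your argument is correct. It is, however, genuinely different from the paper's proof. The paper argues globally: since $F/\Q$ is Galois of odd degree, $F$ has no quadratic subfield, and hence the discriminant $d_{F} = N(\mathfrak{D}_{F})$ is a perfect square in $\Z$; extending the ideal $N(\mathfrak{D}_{F})\Z$ to $\Z_{F}$ and comparing exponents (using that all primes above a given $p$ carry the same exponent $a_{p}$ in $\mathfrak{D}_{F}$) shows that $[F:\Q]\,a_{p}$ is even, and since $[F:\Q]$ is odd, each $a_{p}$ is even. By contrast, you work purely locally, reading off the parity of $v_{\mathfrak{p}}(\mathfrak{D}_{F})$ from Serre's formula $\sum_{i \geq 0}(|G_{i}|-1)$ and the observation that every $|G_{i}|$ divides the odd number $e$.

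Each route has its advantages. The paper's proof is more elementary in that it avoids higher ramification theory entirely, relying only on the classical fact $\Q(\sqrt{d_{F}}) \subseteq F$ together with the norm--different relation. Your approach is more transparent about \emph{where} the evenness comes from and handles the wildly ramified primes without any separate bookkeeping; it also goes through verbatim for a Galois extension of odd degree over an arbitrary base field, whereas the paper's argument, as written, uses that the discriminant is an actual integer whose square class one can detect via subfields of $F$.
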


\begin{proof}
Let $p$ be a prime number ramified in $F/\Q$.
Then, since we assume that $F/\Q$ is Galois,
the ramified index $e_{p}$ of $p$ and the residue degree $f_{p}$ of $p$ are well-defined,
and they satisfy $e_{p}f_{p}g_{p} = [F : \Q]$, where $g_{p}$ denotes the number of prime ideals of $\Z_{F}$ above $p$.
Moreover, the ideal $p\Z_{F}$ decomposes into $\prod_{\mathfrak{p} \mid p} \mathfrak{p}^{e_{p}f_{p}}$,
where $\mathfrak{p}$ runs over the prime ideals of $\Z_{F}$ above $p$.
Thus, 
if $\mathfrak{D}_{F} =  \prod_{p \mid N(\mathfrak{D}_{F})} \prod_{\mathfrak{p} \mid p} \mathfrak{p}^{a_{p}}$
denotes the prime ideal decomposition of $\mathfrak{D}_{F}$,
then we have
\[
	N(\mathfrak{D}_{F})\Z_{F}
	= \left( \prod_{p \mid N(\mathfrak{D}_{F})} \prod_{\mathfrak{p} \mid p} (\mathfrak{p} \cap \Z)^{a_{p}f_{p}} \right)\Z_{F}
	= \prod_{p \mid N(\mathfrak{D}_{F})} \prod_{\mathfrak{p} \mid p} \mathfrak{p}^{a_{p}f_{p}[F : \Q]},
\]
(cf.\ \cite[\S5, Ch.\ I]{Serre_LF}).
On the other hand,
since $N(\mathfrak{D}_{F})\Z_{F}$ is the square of an ideal of $\Z_{F}$
(cf.\ \cite[Remark, p.\ 49]{Serre_LF} and \cite[Proposition 6, Ch.\ III]{Serre_LF}),
we see that $a_{p}f_{p}[F : \Q]$ is even for every prime number $p$.
Since we assume that $[F : \Q]$ is odd,
we see that $f_{p}[F : \Q]$ is also odd,
and hence $a_{p}$ must be even.
This implies that $\mathfrak{D}_{F}$ is square.
\end{proof}

Finally, we summarize the information of $\Lambda^{*}/\Lambda$ for $\Lambda$ is of type $A_{n}, D_{n}, E_{n}$:
\[
	\Lambda^{*}/\Lambda
	\simeq \begin{cases}
		\Z/(n+1)\Z & \text{if $\Lambda$ is of type $A_{n}$}, \\
		(\Z/2\Z)^{\oplus 2} & \text{if $\Lambda$ is of type $D_{n}$ for even $n$}, \\
		\Z/4\Z & \text{if $\Lambda$ is of type $D_{n}$ for odd $n$}, \\
		\Z/(9-n)\Z & \text{if $\Lambda$ is of type $E_{n}$}.
		\end{cases}
\]
In particular,
$(\Lambda^{*}/\Lambda) \otimes_{\Z} \Z_{2}$ is a non-zero cyclic $\Z_{2}$-module whenever $n$ is odd.

\subsection{Proof of \cref{main_ADE}}

In view of \cref{square_discriminant},
\cref{main_ADE} is a consequence of the following theorem for $p = 2$:

\begin{theorem} \label{main_Galois}
Let $F/\Q$ be a Galois extension (of degree $> 1$) such that $\mathfrak{D}_{F}$ is the square of an ideal of $\Z_{F}$.
Let $p$ be a prime number
and $\Lambda$ be a non-zero fractional ideal of $F$
such that $(\Lambda, \Tr)$ is integral and $(\Lambda^{*}/\Lambda) \otimes_{\Z} \Z_{p}$ is a non-zero cyclic $\Z_{p}$-module.
Then, there exists some $x \in \Lambda$ such that $\Tr(x^{2}) \not\in p\Z$.
\end{theorem}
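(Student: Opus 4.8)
The plan is to argue by contradiction: suppose $\Tr(x^{2}) \in p\Z$ for every $x \in \Lambda$, and derive a contradiction with the cyclicity and non-triviality of $(\Lambda^{*}/\Lambda) \otimes_{\Z} \Z_{p}$. The starting observation is that the hypothesis $\Tr(x^{2}) \in p\Z$ for all $x \in \Lambda$ says exactly that the quadratic form $q(x) = \Tr(x^2)$ on the $\F_{p}$-vector space $\Lambda/p\Lambda$ vanishes identically. First I would record the polarization identity $\Tr((x+y)^2) - \Tr(x^2) - \Tr(y^2) = 2\Tr(xy)$; this lets me pass between the quadratic hypothesis and the symmetric bilinear form $\Tr(xy)$. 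The behavior here splits according to whether $p$ is odd or $p = 2$, and the genuinely interesting case (the one used for $p = 2$ in \cref{main_ADE}) is the even prime, so I would handle $p$ odd quickly and concentrate effort on $p = 2$.

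For $p$ odd: if $q \equiv 0$ on $\Lambda/p\Lambda$, then by polarization $\Tr(xy) \in p\Z$ for all $x, y \in \Lambda$ (dividing by $2$, a unit mod $p$), i.e.\ $p^{-1}\Lambda \subset \Lambda^{*}$. Hence $\Lambda^{*}/\Lambda$ contains $p^{-1}\Lambda/\Lambda \simeq (\Z/p\Z)^{\oplus n}$ with $n = [F:\Q] > 1$, contradicting that $(\Lambda^{*}/\Lambda)\otimes_{\Z}\Z_{p}$ is cyclic. (Here I use $n \geq 2$, which is exactly the hypothesis "degree $> 1$".)

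For $p = 2$: polarization only gives $\Tr(xy) \in 2\Z$ for $x \neq y$ and $\Tr(x^2) \in 2\Z$ for all $x$, i.e.\ $\frac{1}{2}\Lambda$ need not lie in $\Lambda^*$, but the hypothesis does force $\Tr(x^2) \in 2\Z$; the point is to upgrade this using the square-different hypothesis. Write $\mathfrak{D}_{F} = \mathfrak{A}^{2}$ for an ideal $\mathfrak{A}$ of $\Z_{F}$, and pick $\alpha$ a generator of $\mathfrak{A}$ locally at each prime above $2$ (or work ideal-theoretically). The key local fact I want is: for a fractional ideal $\mathfrak M$ with $\Tr(\mathfrak M^2) \subset 2\Z_{(2)}$, one gets $\mathfrak M \subset \mathfrak{A}^{-1}\cdot(\text{something with a factor }2)$, forcing $\mathfrak M$ to be $2$-adically "small" compared to $\Lambda^{*}=\Lambda^{-1}\mathfrak D_F^{-1}=\Lambda^{-1}\mathfrak A^{-2}$ (\cref{dual_computation}); combined with $\Lambda \subset \Lambda^*$ this should pin down $(\Lambda^{*}/\Lambda)\otimes\Z_2$ and show it cannot be a non-zero cyclic module. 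Concretely I expect to show that the hypothesis implies $\Lambda \subset \mathfrak A \Lambda^{*}$ (equivalently $\mathfrak A^{-1}\Lambda \subset \Lambda^{*}$), so that $\Lambda^{*}/\mathfrak A^{-1}\Lambda$ and $\mathfrak A^{-1}\Lambda/\Lambda$ are both non-trivial at $2$ and their orders multiply — a $2$-group that is a non-trivial extension and hence not cyclic unless one factor is trivial, which a parity/length count rules out using that $\mathfrak A$ is a nonzero ideal divisible by primes above $2$ (the only primes that can matter, since $\Tr(x^2)\in 2\Z$ away from $2$ is automatic).

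The main obstacle will be the $p = 2$ local analysis: making precise the implication "$\Tr(x^2) \in 2\Z$ for all $x \in \Lambda$ $\Longrightarrow$ $\mathfrak A^{-1}\Lambda \subset \Lambda^{*}$ after localizing at $2$", because unlike the odd case one cannot simply divide the polarization by $2$. I expect to need the description of $\mathfrak D_F$ via the "$\Tr(x^2)$ versus $\Tr(x)^2$" congruence — essentially Stickelberger-type reasoning: for $x \in \Z_F$ one has $\Tr(x^2) \equiv \Tr(x)^2 \pmod{2}$ is \emph{not} automatic, and the failure is governed by whether $2 \mid \disc$, i.e.\ by $\mathfrak D_F$; squareness of $\mathfrak D_F$ is what makes the relevant $\Z_2$-module cyclic-obstruction argument close. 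A clean way to organize this may be to work over the completion $F \otimes_{\Q} \Q_{2}$, decompose into local fields, and reduce to a statement about each $2$-adic factor, where the square-different condition becomes "even valuation of the local different" and the trace form is controlled by standard local computations; then reassemble and compare with the cyclicity of $(\Lambda^{*}/\Lambda)\otimes_{\Z}\Z_{2}$.
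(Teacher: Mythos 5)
Your odd-$p$ argument is correct and genuinely different from the paper's: polarization gives $2\Tr(xy)\in p\Z$, hence $p^{-1}\Lambda\subset\Lambda^{*}$ and a subgroup $(\Z/p\Z)^{\oplus n}$ inside the $p$-part of $\Lambda^{*}/\Lambda$ with $n=[F:\Q]\geq 2$, contradicting cyclicity; this does not even use the Galois hypothesis or the squareness of $\mathfrak{D}_{F}$. But the theorem is applied in the paper precisely at $p=2$ (for \cref{main_ADE}), and for $p=2$ you have only a plan, not a proof, and the plan as written would not close. Two concrete problems. First, the inference ``$\Lambda^{*}/\mathfrak{A}^{-1}\Lambda$ and $\mathfrak{A}^{-1}\Lambda/\Lambda$ are both non-trivial and their orders multiply, hence the extension is not cyclic'' is false: $\Z/4\Z$ is a non-trivial extension of $\Z/2\Z$ by $\Z/2\Z$. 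Second, your key target $\Lambda\subset\mathfrak{A}\Lambda^{*}$ localized at $2$ is vacuous: under the hypotheses of \cref{main_Galois} the ideal $\mathfrak{D}_{F}$ turns out to be \emph{prime to} $2$, so $\mathfrak{A}$ is trivial at $2$ and the claimed inclusion is just integrality. Your working premise that ``$\mathfrak{A}$ is a nonzero ideal divisible by primes above $2$'' is the opposite of what actually happens.

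The missing idea is the paper's first move: writing $(\Lambda^{*}/\Lambda)\otimes_{\Z}\Z_{2}\simeq\Z/2^{v}\Z$, cyclicity forces $\Lambda=\mathfrak{p}^{v}\mathfrak{a}\Lambda^{*}$ for a \emph{single} prime $\mathfrak{p}\mid 2$ with $\mathfrak{a}$ prime to $2$; then $\mathfrak{p}^{v}\mathfrak{a}=\Lambda^{2}\mathfrak{D}_{F}$ by \cref{dual_computation}, and squareness of $\mathfrak{D}_{F}$ forces $v=2w\geq 2$, which forces $\mathfrak{p}$ to have $e_{\mathfrak{p}}=f_{\mathfrak{p}}=1$; since $F/\Q$ is Galois, $2$ splits completely and $\mathfrak{D}_{F}$ is prime to $2$ by \cref{Dedekind}. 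Only after this does the $2$-adic analysis become tractable: $F\otimes_{\Q}\Q_{2}\simeq\Q_{2}^{\oplus n}$, the form is $\sum_{\mathfrak{q}\mid 2}x_{\mathfrak{q}}^{2}$, and one either produces an explicit $x$ by the Chinese remainder theorem with exactly one unit coordinate (the paper's route), or, in your contradiction framing, observes that $\Tr(x^{2})\in 2\Z$ for all $x$ forces every local component $\Lambda_{\mathfrak{q}}$ into $2\Z_{2}$, whence the $2$-part of $\Lambda^{*}/\Lambda$ has $n\geq 2$ non-trivial cyclic factors. Either ending works, but without the splitting-completely step your local analysis has no foothold.
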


\begin{proof}[Proof of \cref{main_Galois}]
Suppose that $(\Lambda, \Tr)$ is integral
and $(\Lambda^{*}/\Lambda) \otimes_{\Z} \Z_{p} \simeq \Z/p^{v}\Z$ as a $\Z_{p}$-module
with some $v > 0$.
Then,
by the (unique) prime ideal factorization,
\footnote{
	We also use implicitly
	an isomorphism $\Lambda/\mathfrak{a}\Lambda \simeq \Z_{F}/\mathfrak{a}$ of $\Z_{F}$-modules
	for every non-zero ideal $\mathfrak{a}$ of $\Z_{F}$,
	which can be verified by localizing $\Z_{F}$.
	}
there exists a prime ideal $\mathfrak{p}$ of $\Z_{F}$ above $p$
and an ideal $\mathfrak{a}$ of $\Z_{F}$ which is prime to $p$
such that
\[
	\Lambda = \mathfrak{p}^{v}\mathfrak{a}\Lambda^{*}.
\]
Moreover,
\cref{dual_computation} shows that
\[
	\Lambda^{*} = \Lambda^{-1} \mathfrak{D}_{F}^{-1},
	\quad \text{i.e.,} \quad
	\mathfrak{p}^{v}\mathfrak{a} = \Lambda^{2}\mathfrak{D}_{F}.
\]
Since we assume that $\mathfrak{D}_{F}$ is square,
$v$ must be even, say $2w$,
and $\mathfrak{a}\mathfrak{D}_{F}^{-1}$ must be the square of a fractional ideal of $F$,
say $\mathfrak{b}^{2}$,
so that
\[
	\Lambda
	= \mathfrak{p}^{w}\mathfrak{b}
	= \mathfrak{p}^{w}\mathfrak{b}_{+}\mathfrak{b}_{-}^{-1}.
\]
On the other hand,
since we assume that $(\Lambda^{*}/\Lambda) \otimes_{\Z} \Z_{p} \simeq \Z/p^{v}\Z$ with some $v > 0$,
we see that $v = 2w \geq 2$.
Hence, $\mathfrak{p}$ cannot be ramified nor inert in $F/\Q$.
Since $F/\Q$ is a Galois extension,
this means that $p$ must split completely in $F/\Q$,
that is,
\[
	p\Z_{F} = \prod_{\sigma \in \Gal(F/\Q)} \mathfrak{p}^{\sigma}
	\quad \text{and} \quad
	\mathfrak{p}^{\sigma} = \mathfrak{p}
	\quad \text{if and only if} \quad \sigma = \op{id}.
\]
In particular,
\cref{Dedekind} shows that
$\mathfrak{D}_{F}$, hence $\mathfrak{b}_{\pm}$ are prime to $p$.

In what follows,
for the localization $\Z_{F, \mathfrak{p}}$ of $\Z_{F}$ at $\mathfrak{p}$,
we prove that $\Tr(x^{2}) \not\in \mathfrak{p}\Z_{F, \mathfrak{p}} \ (\supset p\Z)$ for some $x \in \Lambda = \mathfrak{p}^{w}\mathfrak{b}$.
Indeed,
for every $\tau \in \Gal(F/\Q) \setminus \{ \op{id} \}$,
the Chinese remainder theorem yields a natural surjective map
\[
	\varphi : \Z_{F} \to \Z_{F}/p^{w}\mathfrak{b}_{+}
	\simeq \Z_{F}/(\mathfrak{p}^{w})^{\tau} \times \prod_{\sigma \neq \tau} \Z_{F}/(\mathfrak{p}^{w} )^{\sigma}\times \Z_{F}/\mathfrak{b}_{+}.
\]
Therefore, we have some $x \in \varphi^{-1}(\{(1, 0, \dots, 0)\})$,
which satisfies $x \in (\prod_{\sigma \neq \op{id}, \tau} \mathfrak{p}^{\sigma}) \Lambda \setminus \mathfrak{p}^{\tau}\mathfrak{b}$.
Then, we see that $(x^{\sigma})^{2} \not\in \mathfrak{p}\Z_{F, \mathfrak{p}}$ if and only if $\sigma = \tau^{-1}$.
Hence, $\Tr(x^{2}) = \sum_{\sigma \in \Gal(F/\Q)} (x^{\sigma})^{2} \not\in \mathfrak{p}$ as claimed.
This completes the proof.
\end{proof}

\begin{remark}
In the above proof,
we have also proven that $p$ always splits completely in $F/\Q$
under the setting of \cref{main_Galois}.
It might be valuable to note that
Popov and Zarhin \cite[Appendix B]{Popov-Zarhin} gave a criterion for the cyclicity of $\Z_{F}/\mathfrak{D}_{F}$,
whose proof has a similar flavor to our proof of \cref{main_Galois}.
\end{remark}

\subsection{Some results on lattices of type $A_{n}, D_{n}, E_{n}$}

Let $F$ be a number field of degree $n$.
In this subsection,
we give some results on sub $\Z$-modules of a number field $F$
which is not necessarily a fractional ideal of $F$.
Let $(e_{i})_{1 \leq i \leq n}$ be a $\Z$-basis of $\Z_{F}$
and $\{ \sigma_{j} : F \to \C \mid 1 \leq j \leq n \}$ be the set of ring homomorphisms.
Then, the discriminant $d_{F}$ of $F$ is defined by
\[
	d_{F}
	= \det((\Tr(e_{i}e_{j}))_{1 \leq i, j \leq n})
	= \det((e_{i}^{\sigma_{j}})_{1 \leq i, j \leq n})^{2},
\]
which is independent of the choice of $(e_{i})_{1 \leq i \leq n}$.
In particular,
if $\Lambda \subset F$ is an integral lattice of full rank,
then $\#(\Lambda^{*}/\Lambda) \equiv d_{F} \bmod{\Q^{\times2}}$.
By combining it with the fact that
\[
	d_{F}
	= \#N(\mathfrak{D}_{F})
\]
(see e.g.\ \cite[Ch,\ III, Proposition 6]{Serre_LF}),
we obtain the following result.

\begin{proposition} \label{square_criterion}
Let $F$ be a number field and $d_{F}$ be its discriminant.
\begin{enumerate}
\item
Suppose that $d_{F} \in \Q^{\times 2}$.
Then, there exist no integral lattices $\Lambda \subset F$ such that
$\#(\Lambda^{*}/\Lambda)$ is not a square.
In particular, there exist no lattices $\Lambda \subset F$ of type $A_{n}$ such that $n+1$ is not a square or of type $E_{7}$.

\item
Suppose that $F$ is a totally real number field such that $d_{F} \not\in \Q^{\times 2}$.
Then, there exist no integral lattices $\Lambda \subset F$ such that
$\#(\Lambda^{*}/\Lambda)$ is a square.
In particular,
there exist no lattices $\Lambda \subset F$ of type $A_{l^{2}-1}$, $D_{l+2}$, or $E_{8}$,
where $l \geq 2$ is an integer.
\end{enumerate}
\end{proposition}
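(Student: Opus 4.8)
The plan is to deduce both parts directly from the congruence $\#(\Lambda^{*}/\Lambda) \equiv d_{F} \pmod{\Q^{\times 2}}$ recorded just above (valid for full-rank integral lattices $\Lambda \subset F$), together with the trivial observation that a positive integer lies in $\Q^{\times 2}$ precisely when it is a perfect square. I would note at the outset that the lattices of type $A_{n}, D_{n}, E_{n}$ living inside a field of degree $n$ are automatically of full rank, so the full-rank hypothesis is harmless for the intended applications; accordingly I would phrase the general statement for full-rank $\Lambda$. I would also use $d_{F} = \#N(\mathfrak{D}_{F})$ only to read the hypotheses on $d_{F}$ in concrete integral terms (in (2), $F$ totally real forces $d_{F} > 0$, so ``$d_{F} \notin \Q^{\times 2}$'' just says $d_{F}$ is a non-square positive integer).

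For (1): assuming $d_{F} \in \Q^{\times 2}$ and $\Lambda \subset F$ an integral lattice of full rank, the congruence gives $\#(\Lambda^{*}/\Lambda) \in \Q^{\times 2}$, hence $\#(\Lambda^{*}/\Lambda)$ is a perfect square since it is a positive integer. The ``in particular'' then follows by reading off the orders of the discriminant groups from the table displayed above: a lattice of type $A_{n}$ has $\#(\Lambda^{*}/\Lambda) = n+1$ and one of type $E_{7}$ has $\#(\Lambda^{*}/\Lambda) = 2$, neither of which can be a square under the stated hypotheses.

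For (2): assuming $F$ totally real with $d_{F} \notin \Q^{\times 2}$, I would argue by contradiction: if some full-rank integral $\Lambda \subset F$ had $\#(\Lambda^{*}/\Lambda)$ a perfect square, then the same congruence would force $d_{F} \in \Q^{\times 2}$, contrary to hypothesis. The ``in particular'' again comes from the table: lattices of type $A_{l^{2}-1}$, $D_{l+2}$ (any $l \geq 2$), and $E_{8}$ have discriminant groups of orders $l^{2}$, $4$, and $1$ respectively, all perfect squares, so none of them can be realized inside such an $F$.

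There is no substantive obstacle; the argument is essentially a one-line consequence of the recalled congruence. The only points needing a little care are (i) restricting attention to full-rank lattices, which is exactly the regime where $\#(\Lambda^{*}/\Lambda) \equiv d_{F} \pmod{\Q^{\times 2}}$ is available, and (ii) the elementary but essential step that ``square in $\Q^{\times}$'' and ``square integer'' coincide for the positive integers $\#(\Lambda^{*}/\Lambda)$ and (in (2)) $d_{F}$.
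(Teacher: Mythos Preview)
Your proposal is correct and follows essentially the same approach as the paper: the paper presents the proposition as an immediate consequence of the congruence $\#(\Lambda^{*}/\Lambda) \equiv d_{F} \bmod{\Q^{\times2}}$ (stated just before the proposition) together with the table of discriminant groups, and offers no further proof. Your added remarks about restricting to full-rank lattices and about positive integers in $\Q^{\times 2}$ being perfect squares are the natural details one would fill in, and are entirely in line with the paper's intended argument.
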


\begin{remark}
In \cite{BF-Lenstra},
Bayer-Fl\"uckiger and Lenstra proved that
every Galois extension $L/K$ of odd degree has a self-dual normal $K$-basis,
which gives a refinement of \cref{square_criterion}(1)
when $F/\Q$ is a Galois extension of odd degree.
\end{remark}

By applying \cref{square_criterion}(1) to cyclic extensions of odd prime degree $p$,
we obtain the following result,
which is in contrast with \cref{cyclotomic} for $A_{p-1}$.

\begin{corollary} \label{only_p=3}
Let $p$ be an odd prime number and $F/\Q$ be a cyclic extension of degree $p$.
Then, there exists a lattice $\Lambda \subset F$ of type $A_{p}$ only if $p = 3$.
\end{corollary}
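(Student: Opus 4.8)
The plan is to derive \cref{only_p=3} directly from \cref{square_criterion}(1) by combining it with a classical fact about the discriminants of cyclic extensions of odd prime degree. First I would recall the conductor--discriminant formula: if $F/\Q$ is abelian with group $G$, then $d_{F} = \pm\prod_{\chi} \mathfrak{f}(\chi)$, where $\chi$ runs over the characters of $G$ and $\mathfrak{f}(\chi)$ is the conductor of $\chi$. When $G$ is cyclic of odd prime order $p$, the nontrivial characters come in $(p-1)/2$ conjugate pairs $\{\chi, \bar\chi\}$, each pair having the same conductor; moreover the trivial character contributes $1$. Hence $d_{F} = \prod_{j=1}^{(p-1)/2} \mathfrak{f}(\chi_{j})^{2}$, which is manifestly a perfect square (it is positive since $F$ is totally real, so there is no sign ambiguity). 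Alternatively, one can argue with less machinery: for odd prime $p$ the field $F$ is totally real, so $d_{F} > 0$, and the Galois action on the square root $\sqrt{d_{F}} = \det((e_{i}^{\sigma_{j}}))$ permutes the $\sigma_{j}$ by an element of $A_{p}$ (an odd cycle is an even permutation), hence fixes the determinant up to sign; being $\Gal(\bar\Q/\Q)$-invariant and positive, $\sqrt{d_{F}} \in \Q$, so $d_{F} \in \Q^{\times 2}$.

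Once $d_{F} \in \Q^{\times 2}$ is established, \cref{square_criterion}(1) applies: there is no integral lattice $\Lambda \subset F$ with $\#(\Lambda^{*}/\Lambda)$ a non-square. For a lattice of type $A_{p}$ we have $\Lambda^{*}/\Lambda \simeq \Z/(p+1)\Z$, so $\#(\Lambda^{*}/\Lambda) = p+1$. It therefore suffices to observe that $p+1$ is a perfect square for an odd prime $p$ only when $p = 3$ (since $p+1 = m^{2}$ forces $p = (m-1)(m+1)$, and for a prime this requires $m-1 = 1$, i.e. $m = 2$, $p = 3$). Consequently, for every odd prime $p \neq 3$, no lattice of type $A_{p}$ can be embedded in $F$ as a sub $\Z$-module at all (a fortiori not as a fractional ideal), which is the asserted statement.

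The only genuinely substantive point is the claim that the discriminant of a cyclic extension of odd prime degree is a rational square; everything else is elementary number theory. I expect the conductor--discriminant computation to be the natural route to present, but the parity-of-permutation argument is cleaner and self-contained, so I would likely use the latter and relegate the former to a remark. Note also that the case $p = 3$ is genuinely different and is exactly the subject of \cref{main_cubic}: there $p + 1 = 4 = 2^{2}$ is a square, so the obstruction vanishes and lattices of type $A_{3}$ do occur in every cyclic cubic field, even as fractional ideals with normal $\Z$-bases.
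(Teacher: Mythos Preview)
Your proof is correct and follows exactly the paper's route: apply \cref{square_criterion}(1) using that $d_{F}$ is a perfect square for a cyclic extension of odd prime degree, then observe that $p+1$ is a square only for $p=3$. The paper states the corollary as an immediate consequence of \cref{square_criterion}(1) without spelling out the squareness of $d_{F}$ at that point (it is implicit from \cref{square_discriminant} together with $d_{F} = N(\mathfrak{D}_{F})$), so your parity-of-permutation argument is a welcome self-contained justification.

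One factual slip in your closing remark: you write that for $p=3$ lattices of type $A_{3}$ occur ``even as fractional ideals with normal $\Z$-bases.'' This contradicts \cref{main_ADE} and \cref{no_A3}, which show precisely that no totally real cubic field contains a \emph{fractional ideal} of type $A_{3}$. What \cref{main_cubic} provides are sub $\Z$-modules (not ideals) with normal $\Z$-bases; you should correct that sentence.
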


\begin{remark}
As we shall prove in the next section,
the converse of \cref{only_p=3} is also true.
Although this fact has already been known (see e.g.\ \cite{BF-Lenstra,Conner-Perlis}),
we reprove it in a more explicit way.
Our method is not only explicit
but also has another advantage that
we obtain infinitely many distinct lattices of type $A_{3}$ having normal $\Z$-bases
in an arbitrary cyclic cubic field (cf.\ \cref{characterization}).
\end{remark}

Moreover, by combining \cref{main_ADE} for cyclic cubic fields
and \cref{square_criterion}(2) for non-Galois cubic fields,
we obtain the following result.

\begin{corollary} \label{no_A3}
There exist no totally real cubic fields which contain a fractional ideal of type $A_{3}$.
\end{corollary}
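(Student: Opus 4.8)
The plan is to split according to the two possible shapes of a cubic field $F$: either $F/\Q$ is Galois, in which case it is automatically cyclic (the only group of order $3$ being $\Z/3\Z$), or $F/\Q$ is not Galois, in which case its Galois closure $\widetilde{F}$ has $\Gal(\widetilde{F}/\Q) \simeq S_{3}$. These two possibilities are exhaustive, and I would dispatch them by the two results quoted in the paragraph preceding the statement.

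In the Galois case there is nothing new to do: a cyclic cubic field is a Galois extension of $\Q$ of odd degree $n = 3$, so \cref{main_ADE} applies directly and shows that no fractional ideal $\Lambda \subset F$ has $(\Lambda, \Tr)$ of type $A_{3}$. (One may note in passing that every cyclic cubic field is totally real, since $\Gal(F/\Q) \simeq \Z/3\Z$ has no element of order $2$ to serve as complex conjugation; so this case is genuinely contained in the statement.)

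In the non-Galois case I would invoke \cref{square_criterion}(2). Its hypotheses require that $F$ be totally real, which is given, and that $d_{F} \notin \Q^{\times 2}$. The latter is the classical fact that a non-Galois cubic field has non-square discriminant: the quadratic resolvent $\Q(\sqrt{d_{F}})$ is the fixed field of the unique index-$2$ subgroup $A_{3}$ of $\Gal(\widetilde{F}/\Q) \simeq S_{3}$, hence a genuine quadratic field, so $d_{F}$ is not a rational square. Granting this, \cref{square_criterion}(2) with $l = 2$ — for which $A_{l^{2}-1} = A_{3}$, whose discriminant group has order $4$, a square — yields that $F$ contains no lattice of type $A_{3}$ at all, in particular no fractional ideal of that type. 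Combining the two cases proves \cref{no_A3}.

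I do not foresee a genuine obstacle: both ingredients, \cref{main_ADE} and \cref{square_criterion}, are already established, so the only points needing care are the exhaustiveness of the Galois/non-Galois dichotomy and the non-square discriminant of non-Galois cubics. The latter can even be side-stepped inside the argument: for a hypothetical lattice $\Lambda \subset F$ of type $A_{3}$ one has $\#(\Lambda^{*}/\Lambda) = 4$, so the congruence $\#(\Lambda^{*}/\Lambda) \equiv d_{F} \bmod \Q^{\times 2}$ recalled before \cref{square_criterion} forces $d_{F} \in \Q^{\times 2}$, whence $F/\Q$ is Galois — reducing us to the first case, already handled by \cref{main_ADE}.
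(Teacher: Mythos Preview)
Your proof is correct and follows exactly the route the paper takes: the paper states the corollary as an immediate combination of \cref{main_ADE} for cyclic cubic fields and \cref{square_criterion}(2) for non-Galois (totally real) cubic fields, which is precisely your case split. Your closing remark, reducing everything to the Galois case via the congruence $\#(\Lambda^{*}/\Lambda) \equiv d_{F} \bmod \Q^{\times 2}$, is a pleasant repackaging of the same content.
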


Next,
suppose that $F/\Q$ is a Galois extension with $\Gal(F/\Q) = \{ \sigma_{i} \mid 1 \leq i \leq n \}$.
Then,
since $F$ has a normal $\Q$-basis, say the conjugates of $\alpha$,
we obtain the following relation
\[
	d_{F}
	\equiv \prod_{1 \leq i < j \leq n} (\alpha^{\sigma_{j}} - \alpha^{\sigma_{i}})^{2} \bmod{\Q^{\times 2}}.
\]
Therefore, the condition $d_{F} \not\in \Q^{\times2}$ is equivalent to that
$\Gal(F/\Q)$ acts on the conjugates of $\alpha$ alternatingly.
For example, this is the case if $F/\Q$ is a cyclic extension of even degree.
Therefore, by applying \cref{square_criterion}(2) to $l = 2m+1$,
we obtain the following result.

\begin{corollary} \label{even_cyclic}
Let $F$ be a totally real cyclic field of degree $4m(m+1)$ for some $m \geq 1$.
Then, there exist no lattices $\Lambda \subset F$ of type $A_{4m(m+1)}$ or $E_{8}$.
\end{corollary}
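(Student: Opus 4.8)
The plan is to obtain \cref{even_cyclic} as a direct instance of \cref{square_criterion}(2); the only hypothesis of that proposition which is not immediate is $d_{F} \not\in \Q^{\times2}$, so essentially all of the work is to verify this single fact.

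First I would set $n := 4m(m+1)$, which is even (indeed $n \geq 8$), and fix a normal basis generator $\alpha$ of $F/\Q$. Since $F/\Q$ is cyclic, a generator $\sigma$ of $\Gal(F/\Q)$ sends $\sigma^{i}(\alpha)$ to $\sigma^{i+1}(\alpha)$, hence permutes the $n$ pairwise distinct conjugates $\alpha, \sigma(\alpha), \dots, \sigma^{n-1}(\alpha)$ of $\alpha$ as a single $n$-cycle. A cycle of even length is an odd permutation, so $\Gal(F/\Q)$ acts on the conjugates of $\alpha$ alternatingly; by the discussion immediately preceding the statement, this is equivalent to $d_{F} \not\in \Q^{\times2}$.

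With $F$ totally real and $d_{F} \not\in \Q^{\times2}$ in hand, I would apply \cref{square_criterion}(2) with $l := 2m+1 \geq 3$. On the one hand, $A_{l^{2}-1} = A_{4m^{2}+4m} = A_{n}$, and a lattice of type $A_{n}$ has $\#(\Lambda^{*}/\Lambda) = n+1 = l^{2}$, which is a square; on the other hand, a lattice of type $E_{8}$ has $\#(\Lambda^{*}/\Lambda) = 9 - 8 = 1 = 1^{2}$, again a square. In either case \cref{square_criterion}(2) rules out such a lattice $\Lambda \subset F$, which is precisely the assertion. (The $E_{8}$ alternative has content only when $n = 8$, i.e.\ $m = 1$, since $E_{8}$ has rank $8$; for $m \geq 2$ only the statement about $A_{n}$ is non-vacuous.)

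I do not anticipate a genuine obstacle: once \cref{square_criterion} is available, the corollary reduces to the identity $n+1 = (2m+1)^{2}$ together with the elementary fact that an even-length cycle is an odd permutation. The one point deserving a careful sentence is that a generator of a cyclic Galois group of even order acts on a normal basis orbit as exactly such a cycle, whence $d_{F} \not\in \Q^{\times 2}$.
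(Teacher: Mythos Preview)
Your proposal is correct and follows exactly the paper's own approach: the paragraph preceding \cref{even_cyclic} already notes that a cyclic extension of even degree has $d_{F}\notin\Q^{\times2}$ (via the $n$-cycle/odd-permutation observation you spell out), and the corollary is then obtained by applying \cref{square_criterion}(2) with $l=2m+1$. Your write-up simply makes these steps explicit; there is no substantive difference.
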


\section{Proof of \cref{main_cubic}}

In this section,
we prove the following

\begin{theorem}[= \cref{main_cubic}]
Let $F/\Q$ be a cyclic cubic extension.
Then, there exist infinitely many distinct lattices $\Lambda \subset F$ of type $A_{3}$
which have normal $\Z$-bases.
\end{theorem}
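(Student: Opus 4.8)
The plan is to realize the desired lattices in the form $\Lambda_{\alpha} := \Z\alpha + \Z\alpha^{\sigma} + \Z\alpha^{\sigma^{2}}$, where $\sigma$ generates $\Gal(F/\Q)$ and $\alpha$ runs over the elements of $F$ satisfying the two rational conditions $\Tr_{F/\Q}(\alpha) = 2$ and $\Tr_{F/\Q}(\alpha^{2}) = 2$. First I would verify that any such $\alpha$ does the job. Recall that $F$ is totally real, so the form $\Tr$ on $F$ is simply $(x,y) \mapsto \Tr_{F/\Q}(xy)$; and since $\Gal(F/\Q)$ is cyclic of order $3$, the number $\Tr_{F/\Q}(\alpha^{\sigma^{i}}\alpha^{\sigma^{j}})$ depends only on $j-i \bmod 3$, with both sets $\{\alpha^{\sigma^{k}}\alpha^{\sigma^{k+1}}\}_{k}$ and $\{\alpha^{\sigma^{k}}\alpha^{\sigma^{k+2}}\}_{k}$ equal to the single $\Gal(F/\Q)$-orbit $\{\alpha\alpha^{\sigma},\ \alpha^{\sigma}\alpha^{\sigma^{2}},\ \alpha^{\sigma^{2}}\alpha\}$. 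Hence the Gram matrix of $(\alpha,\alpha^{\sigma},\alpha^{\sigma^{2}})$ has all diagonal entries equal to $\Tr(\alpha^{2})$ and all off-diagonal entries equal to $q := \Tr(\alpha\alpha^{\sigma}) = \tfrac12\big(\Tr(\alpha)^{2}-\Tr(\alpha^{2})\big)$; under the two conditions above it becomes the $3 \times 3$ matrix with $2$ on the diagonal and $1$ elsewhere, which is positive-definite of determinant $4$. Thus $(\alpha,\alpha^{\sigma},\alpha^{\sigma^{2}})$ is $\Q$-linearly independent, hence a \emph{normal} $\Z$-basis of $\Lambda_{\alpha}$, and the base change $(\alpha,\ -\alpha^{\sigma},\ \alpha^{\sigma}-\alpha^{\sigma^{2}})$ has Gram matrix the Cartan matrix of $A_{3}$, so $\Lambda_{\alpha}$ is of type $A_{3}$.

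Next I would produce such $\alpha$ in a one-parameter family. By the genericity of Shanks's simplest cubic polynomials, established in the appendix, there is $t \in \Q$ with $F = \Q(\rho)$ for a root $\rho$ of $f_{t}(x) = x^{3}-tx^{2}-(t+3)x-1$. From $\rho^{3} = t\rho^{2}+(t+3)\rho+1$ one obtains the power sums $\Tr(\rho) = t$, $\Tr(\rho^{2}) = t^{2}+2t+6$, $\Tr(\rho^{3}) = t^{3}+3t^{2}+9t+3$, $\Tr(\rho^{4}) = t^{4}+4t^{3}+14t^{2}+16t+18$, and hence $\gamma_{1} := \rho - \tfrac{t}{3}$ and $\gamma_{2} := \rho^{2} - \tfrac{t^{2}+2t+6}{3}$ form a $\Q$-basis of the trace-zero plane $F^{0} := \{\gamma \in F : \Tr(\gamma) = 0\}$, which is $\Tr$-orthogonal to $\Q\cdot 1$. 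Writing $\alpha = \tfrac{2}{3} + x\gamma_{1} + y\gamma_{2}$ with $x,y \in \Q$ makes $\Tr(\alpha) = 2$ automatic, and the remaining condition $\Tr(\alpha^{2}) = 2$ becomes, after computing the trace-form matrix of $(\gamma_{1},\gamma_{2})$, the affine conic
\[
	\mathcal{C}_{t}\colon\quad (t^{2}+3t+9)\,x^{2} + (2t^{3}+7t^{2}+21t+9)\,xy + (t^{4}+4t^{3}+13t^{2}+12t+9)\,y^{2} = 1 .
\]

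It then remains to show $\mathcal{C}_{t}(\Q)$ is infinite. The discriminant of the quadratic part of $\mathcal{C}_{t}$ turns out to be $-3(t^{2}+3t+9)^{2} \neq 0$, so $\mathcal{C}_{t}$ is a smooth conic, and it suffices to exhibit a single rational point, after which $\mathcal{C}_{t} \cong \PP^{1}_{\Q}$ and $\mathcal{C}_{t}(\Q)$ is infinite. Completing the square reduces the equation of $\mathcal{C}_{t}$ to the problem of representing $\tfrac{1}{t^{2}+3t+9}$ by the binary form $u^{2}+3v^{2}$ over $\Q$, and the identity $4(t^{2}+3t+9) = (2t+3)^{2}+3\cdot 3^{2}$ — equivalently $t^{2}+3t+9 = N_{\Q(\sqrt{-3})/\Q}\!\big(\tfrac{(2t+3)+3\sqrt{-3}}{2}\big)$ — furnishes such a representation, hence an explicit point of $\mathcal{C}_{t}(\Q)$ written in terms of $t$. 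Finally, distinct points of $\mathcal{C}_{t}(\Q)$ give distinct $\alpha \in F$; since a lattice of type $A_{3}$ has exactly $12$ vectors of norm $2$ and each $\alpha$ is such a vector of $\Lambda_{\alpha}$, at most $12$ of the $\alpha$'s can yield the same lattice, so infinitely many $\alpha$ produce infinitely many distinct lattices $\Lambda_{\alpha} \subset F$, as required.

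The step I expect to be the main obstacle is exhibiting a rational point on $\mathcal{C}_{t}$ uniformly in $t$: this is exactly where the special arithmetic of the Shanks family enters — that $\disc f_{t}$ is the square of $t^{2}+3t+9$ and that this quantity is visibly a norm from $\Q(\sqrt{-3})$ via $4(t^{2}+3t+9) = (2t+3)^{2}+27$. A secondary point, deferred to the appendix, is the genericity statement that every cyclic cubic field arises as the splitting field of some $f_{t}$ with $t \in \Q$.
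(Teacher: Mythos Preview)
Your proposal is correct and follows essentially the same route as the paper: both invoke the genericity of the Shanks family, set up a normal-basis ansatz, reduce the $A_{3}$ condition to rational points on a conic equivalent to $x^{2}+3y^{2}=\delta_{t}$, and locate a base point via the identity $4\delta_{t}=(2t+3)^{2}+27$ (the paper uses $(t+3/2,\,3/2)$ on $x^{2}+3y^{2}=\delta_{t}$, which is the same thing). Your distinctness argument---bounding the fibers of $\alpha\mapsto\Lambda_{\alpha}$ by the $12$ roots of $A_{3}$---is a bit slicker than the paper's, which instead shows that the denominators of the coordinate $\lambda_{0}$ are unbounded along the conic.
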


In view of \cref{square_criterion}(2),
we obtain the following consequence.

\begin{corollary} \label{characterization}
Let $F/\Q$ be a totally real cubic field.
Then, the following conditions are equivalent to each other:
\begin{enumerate}
\item
$F/\Q$ is Galois.

\item
$F$ contains a lattices of type $A_{3}$.

\item
$F/\Q$ is Galois and $F$ contains infinitely many lattices of type $A_{3}$ having normal $\Z$-bases.
\end{enumerate}
\end{corollary}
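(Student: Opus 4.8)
The plan is to derive \cref{characterization} by assembling \cref{main_cubic}, \cref{square_criterion}(2), and a classical fact on cubic discriminants; no genuinely new ingredient is required. To begin with, the implications $(3)\Rightarrow(1)$ and $(3)\Rightarrow(2)$ are trivial, since $(3)$ literally asserts that $F/\Q$ is Galois and exhibits at least one lattice of type $A_{3}$ in $F$. Next, $(1)\Rightarrow(3)$ is exactly \cref{main_cubic}: a totally real cubic field that is Galois over $\Q$ is a cyclic cubic extension, so \cref{main_cubic} yields infinitely many distinct lattices of type $A_{3}$ in $F$ having normal $\Z$-bases, while ``$F/\Q$ Galois'' is part of the hypothesis $(1)$. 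Since $(1)\Rightarrow(3)\Rightarrow(2)$, it remains only to establish $(2)\Rightarrow(1)$, which then closes the cycle of equivalences.

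I would prove $(2)\Rightarrow(1)$ in contrapositive form: if the totally real cubic field $F/\Q$ is \emph{not} Galois, then $F$ contains no lattice of type $A_{3}$. Fix $\theta\in\Z_{F}$ with $F=\Q(\theta)$ and let $f\in\Z[x]$ be its minimal polynomial. Since $f$ is an irreducible cubic whose splitting field strictly contains $F$, its Galois group is $S_{3}$, whence $\disc(f)\notin\Q^{\times2}$; combining this with the identity $\disc(f)=[\Z_{F}:\Z[\theta]]^{2}\,d_{F}$, in which the index enters only to an even power, we get $d_{F}\notin\Q^{\times2}$ (and $d_{F}>0$, as $F$ is totally real). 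Now \cref{square_criterion}(2) applies and shows that $F$ admits no integral lattice $\Lambda$ of full rank with $\#(\Lambda^{*}/\Lambda)$ a square. But a lattice $\Lambda\subset F$ of type $A_{3}$ has rank $3=[F:\Q]$ and, since $A_{3}=D_{3}$, discriminant group $\Lambda^{*}/\Lambda\simeq\Z/4\Z$ (as recalled at the end of \S3.1), so that $\#(\Lambda^{*}/\Lambda)=4$ is a square --- a contradiction. Hence $(2)\Rightarrow(1)$, and all three statements are equivalent.

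For \cref{characterization} itself I expect essentially no obstacle: the real work is concentrated in \cref{main_cubic}, whose proof occupies the bulk of \S4, while the Corollary merely repackages it together with \cref{square_criterion}(2). The only point in the argument above that is not purely formal is the equivalence, for a cubic field $F/\Q$, between being non-Galois and having $d_{F}\notin\Q^{\times2}$; this is routine once one notes that the index $[\Z_{F}:\Z[\theta]]$ contributes only a square factor to $\disc(f)$ and that an irreducible cubic has square discriminant exactly when its Galois group lies in the alternating group.
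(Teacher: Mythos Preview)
Your proof is correct and follows the same approach as the paper: the corollary is placed immediately after \cref{main_cubic} with the one-line justification ``In view of \cref{square_criterion}(2), we obtain the following consequence,'' which is precisely your strategy of combining $(1)\Rightarrow(3)$ from \cref{main_cubic} with $(2)\Rightarrow(1)$ via \cref{square_criterion}(2). You have simply made explicit the one point the paper leaves tacit, namely that a non-Galois totally real cubic field has non-square discriminant; your argument via $\disc(f)=[\Z_{F}:\Z[\theta]]^{2}d_{F}$ and the $S_{3}$ Galois group is the standard one and is fine.
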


\begin{example}[cf.\ \cite{Higa}] \label{Higa_example}
Let $F$ be the minimal splitting field of $f_{0}(x) = x^{3}-3x+1$ over $\Q$.
Then, $F$ is a cyclic cubic field.
Take a root $\epsilon$ of $f_{0}$ and a generator $\sigma$ of the Galois group $\Gal(F/\Q)$,
and set
\[
	\alpha_{i} = 1-2\epsilon^{\sigma^{i}} 
	\quad \text{and} \quad
	\beta_{i} = \frac{1}{6} \left( \alpha_{i} + \sum_{j = 0}^{2} \alpha_{j} \right)
	= \frac{1}{3}(2-\epsilon^{\sigma^{i}})
\]
for every $i \in \{ 0, 1, 2 \}$.
Then,
\[
	\Lambda
	:= \Z\beta_{0} + \Z\beta_{1} + \Z\beta_{2} \subset F
\]
is a lattice having an obvious normal $\Z$-basis $\beta = (\beta_{0}, \beta_{1}, \beta_{2})$.
Since
\[
	\Tr(1) = 3,
	\quad
	\Tr(\epsilon) = 0,
	\quad
	\Tr(\epsilon\epsilon^{\sigma}) = -3
	\quad \text{and} \quad
	\Tr(\epsilon^{2}) = \Tr(\epsilon)^{2}-2\Tr(\epsilon\epsilon^{\sigma}) = 6,
\]
we see that
\[
	\Tr(\beta_{i}, \beta_{j})
	= \begin{cases}
		\frac{1}{9}\Tr((2-\epsilon^{\sigma^{i}})^{2}) = \frac{1}{9}( 4 \cdot 3 - 4 \cdot 0 + 1 \cdot 6 ) = 2 & (j = i), \\
		 \frac{1}{9}\Tr((2-\epsilon^{\sigma^{i}})(2-\epsilon^{\sigma^{j}})) 
		= \frac{1}{9}( 4 \cdot 3 - 4 \cdot 0 + 1 \cdot (-3) ) = 1 & (j \neq i).
		\end{cases}
\]
Moreover,
if we define another $\Z$-basis $\beta' = (\beta'_{0}, \beta'_{1}, \beta'_{2})$ of $\Lambda$ by
\[
	\left( \begin{matrix}
		\beta'_{0} \\
		\beta'_{1} \\
		\beta'_{2}
		\end{matrix} \right)
	= \left( \begin{matrix}
		1 & 0 & 0 \\
		-1 & 1 & 0 \\
		0 & -1 & 1 
		\end{matrix} \right)
	\left( \begin{matrix}
		\beta_{0} \\
		\beta_{1} \\
		\beta_{2}
		\end{matrix} \right),
\]
then we have
\begin{align*}
	\Tr(\beta'_{0}, \beta'_{0}) 
	&= \frac{1}{9}\Tr((2-\epsilon)^{2}) 
		= \frac{1}{9}( 4 \cdot 3 - 4 \cdot 0 + 1 \cdot 6 ) = 2, \\
	\Tr(\beta'_{0}, \beta'_{1}) 
	&= \frac{1}{9}\Tr((2-\epsilon)(\epsilon-\epsilon^{\sigma})) 
		= \frac{1}{9}( 1 \cdot (-3) + (-1) \cdot 6 ) = -1, \\
	\Tr(\beta'_{0}, \beta'_{2}) 
	&= \frac{1}{9}\Tr((2-\epsilon)(\epsilon^{\sigma}-\epsilon^{\sigma^{2}})) 
		= 0, \\
	\Tr(\beta'_{1}, \beta'_{1}) 
	&= \frac{1}{9}\Tr((\epsilon-\epsilon^{\sigma})^{2})
		= \frac{1}{9}((-2) \cdot (-3) + 2 \cdot 6) = 2, \\
	\Tr(\beta'_{1}, \beta'_{2})
	&= \frac{1}{9}\Tr((\epsilon-\epsilon^{\sigma})(\epsilon^{\sigma}-\epsilon^{\sigma^{2}}))
		= \frac{1}{9}(1 \cdot (-3) + (-1) \cdot 6) = -1, \\
	\Tr(\beta'_{2}, \beta'_{2})
	&= \frac{1}{9}\Tr((\epsilon^{\sigma}-\epsilon^{\sigma^{2}})^{2})
		= \frac{1}{9}((-2) \cdot (-3) + 2 \cdot 6) = 2,
\end{align*}
which shows that $(\Lambda, \Tr)$ is of type $A_{3}$.
\end{example}

In order to prove \cref{main_cubic},
first, recall that
every cyclic cubic extension $F/\Q$ is obtained as the minimal splitting field of a Shanks polynomial \cite{Shanks}
\[
	f_{t}(x) := x^{3} - tx^{2} - (t+3)x -1
\]
for some $t \in \Q \setminus \{ -3/2 \}$.
\footnote{
	Note that $f_{-3/2}(x) = (x+2)(x+1/2)(x-1)$.
	}
For the proof, see appendix.
The above example corresponds to the case of $t = 0$.
In what follows,
for every cyclic cubic field $F$,
we fix $t \in \Q$ so that $F$ is the minimal splitting field of $f_{t}$,
and take a root $\epsilon$ of $f_{t}$ and a generator $\sigma$ of $\Gal(F/\Q)$ so that
\[
	\epsilon^{\sigma}
	= -\frac{1}{1+\epsilon}
	\quad \text{and} \quad
	\epsilon^{\sigma^{2}}
	= -\frac{1}{1+\epsilon^{\sigma}}
	= -\frac{1+\epsilon}{\epsilon}.
\]

\begin{lemma} \label{Q-basis}
In the above setting, the triple $(\epsilon, \epsilon^{\sigma}, \epsilon^{\sigma^{2}})$ forms a normal $\Q$-basis of $F$ if and only if $t \neq 0$.
\end{lemma}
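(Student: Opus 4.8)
The idea is to reduce the claim to the non-vanishing of one explicit $3\times 3$ determinant. Since $F$ is the minimal splitting field of $f_{t}$ and $[F:\Q]=3$, the polynomial $f_{t}$ is irreducible over $\Q$, so $\epsilon\notin\Q$ and $(1,\epsilon,\epsilon^{2})$ is a $\Q$-basis of $F$. As $3$ is prime, the stabilizer of $\epsilon$ in the cyclic group $\Gal(F/\Q)$ is trivial, so the orbit $\{\epsilon,\epsilon^{\sigma},\epsilon^{\sigma^{2}}\}$ consists of three distinct elements; consequently this orbit is a \emph{normal} $\Q$-basis of $F$ if and only if $\epsilon,\epsilon^{\sigma},\epsilon^{\sigma^{2}}$ are $\Q$-linearly independent, i.e.\ if and only if the matrix $M$ whose rows are the coordinate vectors of $\epsilon,\epsilon^{\sigma},\epsilon^{\sigma^{2}}$ with respect to $(1,\epsilon,\epsilon^{2})$ is invertible. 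So the statement to prove is $\det M\neq 0 \iff t\neq 0$.

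Next I would compute these coordinate vectors using the given formulas $\epsilon^{\sigma}=-1/(1+\epsilon)$ and $\epsilon^{\sigma^{2}}=-(1+\epsilon)/\epsilon$, which requires inverting $1+\epsilon$ and $\epsilon$ in $\Q[\epsilon]=F$. This is done cleanly by division with remainder against $f_{t}$: since $f_{t}(-1)=1$, dividing gives $(1+\epsilon)\bigl(\epsilon^{2}-(t+1)\epsilon-2\bigr)=-1$, hence $\epsilon^{\sigma}=\epsilon^{2}-(t+1)\epsilon-2$; since $f_{t}(0)=-1$, dividing gives $\epsilon\bigl(\epsilon^{2}-t\epsilon-(t+3)\bigr)=1$, hence $\epsilon^{\sigma^{2}}=-\epsilon^{2}+t\epsilon+(t+2)$. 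A useful consistency check: summing the three expressions yields $\epsilon+\epsilon^{\sigma}+\epsilon^{\sigma^{2}}=t=\Tr_{F/\Q}(\epsilon)$, as it must from the coefficients of $f_{t}$.

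Finally I would assemble
\[
    M=\begin{pmatrix} 0 & 1 & 0 \\ -2 & -(t+1) & 1 \\ t+2 & t & -1 \end{pmatrix}
\]
and expand along the first row, obtaining $\det M = t$. Hence $(\epsilon,\epsilon^{\sigma},\epsilon^{\sigma^{2}})$ is a normal $\Q$-basis of $F$ precisely when $t\neq 0$, which is the assertion of the lemma.

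\textbf{Main obstacle.} There is essentially no deep obstacle here; the proof is a short explicit computation. The only point deserving care is the inversion of $1+\epsilon$ and $\epsilon$ modulo $f_{t}$, and phrasing it as division with remainder (noting that $f_{t}(-1)$ and $f_{t}(0)$ are units) makes the formulas transparent and avoids any guesswork. The one conceptual remark worth making explicit is why ``basis'' and ``normal basis'' coincide in this situation, which is immediate from $\Gal(F/\Q)$ being cyclic of prime order acting freely on the Galois orbit of $\epsilon\notin\Q$.
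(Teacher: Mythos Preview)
Your proof is correct and follows essentially the same approach as the paper: both express $\epsilon^{\sigma}$ and $\epsilon^{\sigma^{2}}$ as polynomials in $\epsilon$ (the paper by clearing the common denominator $\epsilon(1+\epsilon)$ and reducing $\epsilon^{3}$ via $f_{t}(\epsilon)=0$, you by explicitly inverting $1+\epsilon$ and $\epsilon$ using $f_{t}(-1)=1$, $f_{t}(0)=-1$) and then check linear independence. Your packaging of the computation as $\det M = t$ is a bit cleaner, since it handles both directions of the biconditional simultaneously, whereas the paper argues only the direction $t\neq 0\Rightarrow$ independent and leaves $t=0\Rightarrow\epsilon+\epsilon^{\sigma}+\epsilon^{\sigma^{2}}=\Tr(\epsilon)=0$ as implicit.
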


\begin{proof}
It is sufficient to show that
if $t \neq 0$, then $\epsilon, \epsilon^{\sigma}, \epsilon^{\sigma^{2}}$ are $\Q$-linearly independent.
Note that, for every $a, b, c \in \Q$,
the following identity holds:
\begin{align*}
	a\epsilon + b\epsilon^{\sigma} + c\epsilon^{\sigma^{2}}
	&= \frac{1}{\epsilon(1+\epsilon)} \left( a\epsilon^{2}(1+\epsilon) - b\epsilon - c(1+\epsilon)^{2} \right) \\
	&= \frac{1}{\epsilon(1+\epsilon)} \left( (ta+(a-c))\epsilon^{2} + (((t+1)a-b+2(a-c))\epsilon + (a-c) \right).
\end{align*}
Since we assume that
$f_{t}(x)$ is irreducible in $\Q[x]$
and $t \neq 0$,
the above rational function of $\epsilon$ vanishes if and only if $a = b = c = 0$.
\end{proof}

In what follows,
we assume that $t \neq 0$.
It is not essential because $f_{0}(x) = -x^{3}f_{-3}(x^{-1})$.
For every $\lambda = (\lambda_{0}, \lambda_{1}, \lambda_{2}) \in \Q^{\oplus 3}$ and $i \in \Z$,
set
\[
	\langle \lambda, \epsilon^{\sigma^{i}} \rangle
	:= \sum_{j = 0}^{2} \lambda_{j}\epsilon^{\sigma^{i+j}},
	\quad \text{i.e.,} \quad
	\left( \begin{matrix}
		\langle \lambda, \epsilon \rangle \\
		\langle \lambda, \epsilon^{\sigma} \rangle \\
		\langle \lambda, \epsilon^{\sigma^{2}} \rangle
		\end{matrix} \right)
	= \left( \begin{matrix}
		\lambda_{0} & \lambda_{1} & \lambda_{2} \\
		\lambda_{2} & \lambda_{0} & \lambda_{1} \\
		\lambda_{1} & \lambda_{2} & \lambda_{0} \\
		\end{matrix} \right)
	\left( \begin{matrix}
		\epsilon \\
		\epsilon^{\sigma} \\
		\epsilon^{\sigma^{2}}
		\end{matrix} \right).
\]
Let $\Lambda \subset F$ be a lattice having a normal $\Z$-basis.
Then,
\cref{Q-basis} shows that
there exists some $\lambda \in \Q^{\oplus 3}$
such that
\[
	\Lambda
	= \Z\langle \lambda, \epsilon \rangle + \Z\langle \lambda, \epsilon \rangle^{\sigma} + \Z\langle \lambda, \epsilon \rangle^{\sigma^{2}}
	= \Z\langle \lambda, \epsilon \rangle + \Z\langle \lambda, \epsilon^{\sigma}\rangle + \Z\langle \lambda, \epsilon^{\sigma^{2}} \rangle.
\]
In what follows, we fix such $\lambda$ for every lattice $\Lambda$ of $F$.
The following lemma is a consequence of straight forward calculation.

\begin{lemma} \label{base_change}
Let $(\alpha_{0}, \alpha_{1}, \alpha_{2})$ be a $\Z$-basis of $\Lambda$ and $A \in \GL(3, \Z)$.
Then, the following conditions are equivalent to each other:
\begin{enumerate}
\item
The following equality of vectors holds:
\[
	\left(  \begin{matrix}
	 	\alpha_{0} \\
		\alpha_{1} \\
		\alpha_{2}
		\end{matrix} \right)
	= A
	\left(  \begin{matrix}
	 	\langle \lambda, \epsilon \rangle \\
		\langle \lambda, \epsilon \rangle^{\sigma} \\
		\langle \lambda, \epsilon \rangle^{\sigma^{2}}
		\end{matrix} \right).
\]

\item
The following equality of matrices holds:
\[
	(\Tr(\alpha_{i}\alpha_{j}))_{0 \leq i, j \leq 2}
	= A(\Tr(\langle \lambda, \epsilon^{\sigma^{i}} \rangle \cdot \langle \lambda, \epsilon^{\sigma^{j}} \rangle))_{0 \leq i, j \leq 2}{^{t}A}.
\]
\end{enumerate}
\end{lemma}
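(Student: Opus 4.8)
The plan is to read conditions (1) and (2) as two encodings of one change-of-$\Z$-basis formula for the Gram matrix of the trace pairing, so that the lemma becomes pure linear algebra once the notation is unwound. The first bookkeeping point is the identity $\langle\lambda,\epsilon\rangle^{\sigma^{k}}=\langle\lambda,\epsilon^{\sigma^{k}}\rangle$ for $k\in\Z$, which is immediate: applying $\sigma^{k}$ to $\langle\lambda,\epsilon\rangle=\sum_{j=0}^{2}\lambda_{j}\epsilon^{\sigma^{j}}$ and using $\sigma^{3}=\op{id}$ to reindex gives $\sum_{j=0}^{2}\lambda_{j}\epsilon^{\sigma^{k+j}}=\langle\lambda,\epsilon^{\sigma^{k}}\rangle$. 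Putting $v_{k}:=\langle\lambda,\epsilon^{\sigma^{k}}\rangle$, the column vector in (1) is $(v_{0},v_{1},v_{2})^{t}$; by construction $(v_{0},v_{1},v_{2})$ is a normal $\Z$-basis of $\Lambda$, and it is a $\Q$-basis of $F$ by \cref{Q-basis}. The matrix in (2) is then exactly $G:=(\Tr(v_{i}v_{j}))_{0\le i,j\le 2}$.

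For (1)$\Rightarrow$(2) I would substitute directly. Condition (1) says $\alpha_{i}=\sum_{k=0}^{2}A_{ik}v_{k}$, and since $(x,y)\mapsto\Tr(xy)$ is a symmetric $\Q$-bilinear form on $F$ (for a totally real field, in particular a cyclic cubic field, the complex conjugation is trivial, so this is just $\Q$-linearity of $\Tr_{F/\Q}$ and commutativity of multiplication), bilinearity gives
\[
\Tr(\alpha_{i}\alpha_{j})=\sum_{k=0}^{2}\sum_{l=0}^{2}A_{ik}A_{jl}\Tr(v_{k}v_{l})=\bigl(A\,G\,{}^{t}A\bigr)_{ij},
\]
which is (2). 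This is the ``straightforward calculation'' of the statement.

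For (2)$\Rightarrow$(1) I would use uniqueness of the integral transition matrix. Both $(\alpha_{0},\alpha_{1},\alpha_{2})$ and $(v_{0},v_{1},v_{2})$ are $\Z$-bases of the free $\Z$-module $\Lambda$ of rank $3$, so there is a unique $B\in\GL(3,\Z)$ with $(\alpha_{0},\alpha_{1},\alpha_{2})^{t}=B(v_{0},v_{1},v_{2})^{t}$; applying the implication just proved to $B$ and comparing with (2) gives $B\,G\,{}^{t}B=A\,G\,{}^{t}A$. Since $\Tr$ is non-degenerate, $G$ is invertible over $\Q$, so $C:=A^{-1}B\in\GL(3,\Z)$ is an isometry of $G$, and it remains to conclude $C=I$, i.e.\ $B=A$.

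I expect this last rigidity step to be the main obstacle: a priori $C$ only lies in the orthogonal group of the form $G$, which is finite but in general non-trivial (for a form of type $A_{3}$ it has order $48$), so the equivalence as literally displayed should be read up to replacing the chosen $\Z$-basis by an isometric one. What is really needed downstream --- and what I would record explicitly --- is the existential reformulation: $\Lambda$ admits a $\Z$-basis with a prescribed Gram matrix $M$ if and only if $M=A\,G\,{}^{t}A$ for some $A\in\GL(3,\Z)$. This follows from (1)$\Rightarrow$(2) alone (given such an $A$, the formula of (1) defines a $\Z$-basis of $\Lambda$; conversely one reads off a suitable $A$ as the transition matrix $B$ above), and it is exactly the input to the conic parametrisation of lattices of type $A_{3}$ used in the proof of \cref{main_cubic}.
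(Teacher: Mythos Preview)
Your computation for $(1)\Rightarrow(2)$ matches the paper exactly: the paper declares the lemma ``a consequence of straightforward calculation'' and suppresses the proof, but the draft argument left in the source expands $\alpha_i=\sum_k a_{ik}v_k$ and pushes the sums through $\Tr$ just as you do.

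Your scrutiny of $(2)\Rightarrow(1)$ goes beyond the paper and is well founded. The paper's draft handles the converse in one line --- ``by reversing the above argument and noting that $\Lambda$ is stable under the action of $\GL(3,\Z)$'' --- which does not meet the obstruction you isolate: since the Gram matrix $G$ of the normal basis $(v_0,v_1,v_2)$ is circulant, every cyclic permutation matrix already lies in its integral orthogonal group, so $AG\,{}^{t}A=BG\,{}^{t}B$ cannot force $A=B$, and the literal equivalence fails for a \emph{fixed} pair $((\alpha_i),A)$. Your existential reformulation --- $\Lambda$ admits a $\Z$-basis with prescribed Gram matrix $M$ iff $M=AG\,{}^{t}A$ for some $A\in\GL(3,\Z)$ --- is both correct and exactly what the proof of \cref{main_cubic} actually uses (one only needs that the displayed $A$ produces, via the formula in (1), \emph{some} $\Z$-basis of $\Lambda$ with Gram matrix $M$), so the downstream argument is unaffected.
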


The following is a key ingredient of our proof of \cref{main_cubic}.
In order to state it,
recall that the discriminant of $f_{t}$ is the square of a non-zero rational number $\delta_{t} := t^{2}+3t+9$.

\begin{lemma} \label{quadratic}
Let $d, e \in \Q$.
Then, the following conditions are equivalent to each other:
\begin{enumerate}
\item
A cyclic cubic field $F$ contains a lattice $\Lambda$ having a normal $\Z$-basis
$(\langle \lambda, \epsilon \rangle, \langle \lambda, \epsilon \rangle^{\sigma}, \langle \lambda, \epsilon \rangle^{\sigma^{2}})$
satisfying the simultaneous equation
\[
	\begin{cases}
	\Tr(\langle \lambda, \epsilon \rangle^{2}) = d, \\
	\Tr(\langle \lambda, \epsilon \rangle \langle \lambda, \epsilon^{\sigma} \rangle) = e
	\end{cases}
\]

\item
The rational number $d+2e$ is a square in $\Q$
and the following equation has a solution $(x, y) \in \Q^{\oplus 2}$:
\[
	x^{2}+3y^{2} = (d-e)\delta_{t}.
\]
\end{enumerate}
\end{lemma}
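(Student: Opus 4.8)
The plan is to translate both conditions into polynomial identities in the coordinates $\lambda=(\lambda_{0},\lambda_{1},\lambda_{2})$ of the generator $\eta:=\langle\lambda,\epsilon\rangle$ of the normal $\Z$-basis, and then solve the resulting system for $\lambda$. First I would record two Gram matrices. Since $\Tr$ is $\Gal(F/\Q)$-invariant, for any $\theta\in F$ generating a normal basis one has $\Tr(\theta\theta^{\sigma})=\Tr(\theta\theta^{\sigma^{2}})$, and the Gram matrix of $(\theta,\theta^{\sigma},\theta^{\sigma^{2}})$ is the circulant $\op{circ}(\Tr(\theta^{2}),\Tr(\theta\theta^{\sigma}),\Tr(\theta\theta^{\sigma}))$. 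Applying this to $\theta=\epsilon$ and using the symmetric functions $e_{1}=t$, $e_{2}=-(t+3)$, $e_{3}=1$ of the roots of $f_{t}$, one gets $\Tr(\epsilon^{2})=t^{2}+2t+6$ and $\Tr(\epsilon\epsilon^{\sigma})=-(t+3)$, so the Gram matrix of $(\epsilon,\epsilon^{\sigma},\epsilon^{\sigma^{2}})$ equals $M:=\delta_{t}I+EJ$, where $E:=-(t+3)$, $J$ is the all-ones matrix, and $\delta_{t}=t^{2}+3t+9\ (>0)$, with the useful identities $\delta_{t}=(t^{2}+2t+6)-E$ and $\delta_{t}+3E=t^{2}$. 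By the very definition of $\langle\lambda,\epsilon^{\sigma^{i}}\rangle$, the base change $(\eta,\eta^{\sigma},\eta^{\sigma^{2}})^{t}=C\,(\epsilon,\epsilon^{\sigma},\epsilon^{\sigma^{2}})^{t}$ is given by the circulant $C=\op{circ}(\lambda_{0},\lambda_{1},\lambda_{2})$, so by a direct computation (cf.\ \cref{base_change}) the Gram matrix of $(\eta,\eta^{\sigma},\eta^{\sigma^{2}})$ is $CM\,{}^{t}C$; using $C\mathbf{1}=S\mathbf{1}$ with $S:=\lambda_{0}+\lambda_{1}+\lambda_{2}$ and $C\,{}^{t}C=(Q-R)I+RJ$ with $Q:=\sum_{i}\lambda_{i}^{2}$, $R:=\sum_{i<j}\lambda_{i}\lambda_{j}$, a short expansion gives
\[
	\Tr(\eta^{2})=\delta_{t}Q-(t+3)S^{2},\qquad \Tr(\eta\eta^{\sigma})=\delta_{t}R-(t+3)S^{2}.
\]

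From these formulas two scalar invariants fall out: $\Tr(\eta^{2})+2\Tr(\eta\eta^{\sigma})=(\delta_{t}+3E)S^{2}=(tS)^{2}$ and $\Tr(\eta^{2})-\Tr(\eta\eta^{\sigma})=\delta_{t}(Q-R)=\delta_{t}(a^{2}+ab+b^{2})$, where $a:=\lambda_{0}-\lambda_{1}$ and $b:=\lambda_{1}-\lambda_{2}$ and the last step is the elementary identity $Q-R=\tfrac12\sum_{i<j}(\lambda_{i}-\lambda_{j})^{2}=a^{2}+ab+b^{2}$. Since $(\lambda_{0},\lambda_{1},\lambda_{2})\mapsto(a,b,S)$ is a $\Q$-linear automorphism of $\Q^{\oplus3}$, and since $\Tr(\eta^2)$ and $\Tr(\eta\eta^{\sigma})$ depend on $\lambda$ only through these two invariants, prescribing $\Tr(\eta^{2})=d$ and $\Tr(\eta\eta^{\sigma})=e$ is exactly equivalent to prescribing some $S\in\Q$ with $(tS)^{2}=d+2e$ — possible for a suitable sign of $S$ precisely when $d+2e$ is a square in $\Q$ (here $t\neq0$ is used) — together with some $(a,b)\in\Q^{\oplus2}$ with $a^{2}+ab+b^{2}=(d-e)/\delta_{t}$.

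It remains to see that $a^{2}+ab+b^{2}=(d-e)/\delta_{t}$ has a rational solution if and only if $x^{2}+3y^{2}=(d-e)\delta_{t}$ does. Completing the square, $(2a+b)^{2}+3b^{2}=4(a^{2}+ab+b^{2})$, and $(a,b)\mapsto(2a+b,b)$ is a $\Q$-linear automorphism of $\Q^{\oplus2}$; hence $a^{2}+ab+b^{2}=r$ is solvable over $\Q$ iff $x^{2}+3y^{2}=4r$ is, iff $x^{2}+3y^{2}=r$ is (scaling $(x,y)$ by the square $4$); and $x^{2}+3y^{2}=(d-e)/\delta_{t}$ is solvable iff $x^{2}+3y^{2}=(d-e)\delta_{t}$ is, the right-hand sides differing by the square factor $\delta_{t}^{2}$. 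Tracing this chain forwards yields $(1)\Rightarrow(2)$; tracing it backwards, from the data of $(2)$ one builds $S$ and then $(a,b)$, recovers the (unique) $\lambda\in\Q^{\oplus3}$ with those invariants, and sets $\eta=\langle\lambda,\epsilon\rangle$, $\Lambda:=\Z\eta+\Z\eta^{\sigma}+\Z\eta^{\sigma^{2}}$. By \cref{Q-basis} the triple $(\eta,\eta^{\sigma},\eta^{\sigma^{2}})$ is a $\Q$-basis of $F$ — so $\Lambda$ is a genuine rank-$3$ lattice — because its Gram determinant is $(d+2e)(d-e)^{2}$, which is nonzero: $\Lambda$ is required to have full rank in $F$, which forces $d+2e\neq0$ and $d\neq e$.

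The computation is entirely routine once organized through the circulant structure; the steps most prone to slips are the change-of-basis identity of \cref{base_change} (keeping $C$ and ${}^{t}C$ on the correct sides) and the chain of square-rescalings relating the conics $a^{2}+ab+b^{2}=r$ and $x^{2}+3y^{2}=(d-e)\delta_{t}$. The only conceptual point deserving an explicit remark is that the degenerate cases $d+2e=0$ and $d=e$ are incompatible with $\Lambda$ being a full-rank lattice in $F$, and are harmlessly excluded on that ground.
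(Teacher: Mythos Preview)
Your argument is correct and takes a genuinely different route from the paper. Both proofs start from the same trace identities, which in your notation read $d=\delta_{t}Q-(t+3)S^{2}$ and $e=\delta_{t}R-(t+3)S^{2}$ (equivalently the paper's $(t^{2}+2t+6)L(\lambda)^{2}-2\delta_{t}Q(\lambda)$ and $-(t+3)L(\lambda)^{2}+\delta_{t}Q(\lambda)$). From there the paper keeps the elementary symmetric functions $L(\lambda)=S$ and $Q(\lambda)=R$ and asks when a cubic $u^{3}-fu^{2}+cu+v$ with prescribed first two coefficients and free constant term splits completely over $\Q$; it fixes one root $t\lambda_{0}$, writes down the residual quadratic, and computes its discriminant $D_{t,e,f}(\lambda_{0})$, whose square condition becomes the conic $x^{2}+3y^{2}=(d-e)\delta_{t}$. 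You instead pass to the coordinates $(a,b,S)=(\lambda_{0}-\lambda_{1},\lambda_{1}-\lambda_{2},\lambda_{0}+\lambda_{1}+\lambda_{2})$, under which the system decouples immediately into the scalar equation $(tS)^{2}=d+2e$ and the norm-form equation $a^{2}+ab+b^{2}=(d-e)/\delta_{t}$, the latter being equivalent to the target conic by completing the square and rescaling by the squares $4$ and $\delta_{t}^{2}$.

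Your approach is shorter and more conceptual: it exploits the $\Gal(F/\Q)$-module structure by splitting $\Q^{\oplus3}$ into the trivial summand (carrying $S$) and its complement (carrying the norm form of $\Q(\zeta_{3})$), bypassing the auxiliary cubic entirely. The paper's route, on the other hand, produces without extra work the explicit formula $\lambda_{0}=\tfrac{2}{3}(x/\delta_{t}+1/t)$ used downstream in the proof of \cref{main_cubic}; your change of variables yields an equally explicit parametrization, but through different intermediate coordinates. Your remark that the degenerate cases $d+2e=0$ and $d=e$ are excluded precisely by the requirement that $\Lambda$ have full rank (equivalently $\det C=S(Q-R)\neq0$) is a point the paper leaves implicit.
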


\begin{proof}
Define two symmetric homogeneous polynomials in $\lambda = (\lambda_{0}, \lambda_{1}, \lambda_{2}) \in \Q^{\oplus 3}$ as follows:
\[
	L(\lambda) := \sum_{i = 0}^{2} \lambda_{i}
	\quad \text{and} \quad
	Q(\lambda) := \sum_{0 \leq i < j \leq 2} \lambda_{i}\lambda_{j}.
\]
Then, we have
\begin{align*}
	\Tr(\langle \lambda, \epsilon \rangle^{2})
	&= \Tr\left( \sum_{i = 0}^{2} \lambda_{i}\epsilon^{\sigma^{i}} \sum_{j = 0}^{2} \lambda_{j}\epsilon^{\sigma^{j}} \right)
		= \Tr\left( \sum_{k = 0}^{2} \left( \epsilon^{\sigma^{k}} \right)^{2} \lambda_{k}^{2}
			+ \sum_{\substack{0 \leq k, l \leq 2 \\ k \neq l}} \epsilon^{\sigma^{k}}\epsilon^{\sigma^{l}} \lambda_{k}\lambda_{l} \right) \\
	&= \Tr\left( \epsilon^{2} \right) \sum_{k = 0}^{2} \lambda_{k}^{2}
		+ \Tr\left(\epsilon\epsilon^{\sigma} \right) \sum_{\substack{0 \leq k, l \leq 2 \\ k \neq l}} \lambda_{k}\lambda_{l} \\
	&= (t^{2} + 2t+6) L(\lambda)^{2} - 2\delta_{t}Q(\lambda)
\end{align*}
and
\begin{align*}
	\Tr(\langle \lambda, \epsilon \rangle \langle \lambda, \epsilon^{\sigma} \rangle)
	&= \Tr\left( \sum_{i = 0}^{2} \lambda_{i}\epsilon^{\sigma^{i}} \sum_{j = 0}^{2} \lambda_{j}\epsilon^{\sigma^{j+1}} \right)
		= \Tr\left( \sum_{k = 0}^{2} \left( \epsilon^{\sigma^{k}}\epsilon^{\sigma^{k+1}} \right) \lambda_{k}^{2}
			+ \sum_{\substack{0 \leq k, l \leq 2 \\ k \neq l}} \epsilon^{\sigma^{k}}\epsilon^{\sigma^{l+1}} \lambda_{k}\lambda_{l} \right) \\
	&= \Tr\left( \epsilon\epsilon^{\sigma} \right) \sum_{k = 0}^{2} \lambda_{k}^{2}
		+ \Tr\left(\epsilon^{2} \right) \sum_{\substack{0 \leq k, l \leq 2 \\ k-l \equiv 1 \bmod{3}}} \lambda_{k}\lambda_{l}
		+ \Tr\left(\epsilon\epsilon^{\sigma} \right) \sum_{\substack{0 \leq k, l \leq 2 \\ k-l \equiv 2 \bmod{3}}} \lambda_{k}\lambda_{l} \\
	&= \Tr\left( \epsilon\epsilon^{\sigma} \right) \sum_{k = 0}^{2} \lambda_{k}^{2}
		+ \left( \Tr\left(\epsilon^{2} \right) + \Tr\left( \epsilon\epsilon^{\sigma} \right) \right) \sum_{0 \leq k < l \leq 2} \lambda_{k}\lambda_{l} \\
	&= -(t+3)L(\lambda)^{2} + \delta_{t}Q(\lambda)
\end{align*}
Therefore,
by taking into account of the assumption $t \neq 0$,
the simultaneous equation
\[
	\begin{cases}
	\Tr(\langle \lambda, \epsilon \rangle^{2}) = d, \\
	\Tr(\langle \lambda, \epsilon \rangle \langle \lambda, \epsilon^{\sigma} \rangle) = e
	\end{cases}
\]
is equivalent to the following simultaneous equation:
\[
	\begin{cases}
	t^{2}L(\lambda)^{2} = d+2e, \\
	t^{2}\delta_{t}Q(\lambda) = t^{2}e+(t+3)(d+2e).
	\end{cases}
\]
In particular, if $d+2e$ is not a square in $\Q$,
then the above simultaneous equation has no solutions $\lambda \in \Q^{\oplus 3}$.

In what follows,
we assume that $d+2e$ is a square in $\Q$, say $f^{2}$ with $f \in \Q$.
Then, we see that the given simultaneous equation is equivalent to that
\[
	\begin{cases}
	L(t\lambda) = \pm f, \\
	Q(t\lambda) = \frac{t^{2}e + (t+3)f^{2}}{\delta_{t}}.
	\end{cases}
\]
Therefore,
it has a solutions $\lambda \in \Q^{\oplus 3}$ if and only if
there exists some $v \in \Q$ such that
the cubic polynomial in $u$
\[
	g_{v}(u) := u^{3} - fu^{2} + \frac{t^{2}e + (t+3)f^{2}}{\delta_{t}}u + v
\]
splits completely into $(u-t\lambda_{0})(u-t\lambda_{1})(u-t\lambda_{2})$ in $\Q[u]$.
The latter condition can be restated 
that for some $\lambda_{0} \in \Q$ the cubic polynomial in $u$
\[
	(u-t\lambda_{0})\left( u^{2}-(f-t\lambda_{0})u + \frac{t^{2}e + (t+3)f^{2}}{\delta_{t}} -t\lambda_{0} (f-t\lambda_{0}) \right)
\]
splits completely into $(u-t\lambda_{0})(u-t\lambda_{1})(u-t\lambda_{2})$ with some $\lambda_{1}, \lambda_{2} \in \Q$.
This is the case
if and only if the discriminant of the second quadratic factor
\begin{align*}
	D_{t, e, f}(\lambda_{0})
	&:= (f-t\lambda_{0})^{2} - 4\left( \frac{t^{2}e + (t+3)f^{2}}{\delta_{t}} -t\lambda_{0} (f-t\lambda_{0}) \right) \\
	&= -\frac{1}{3}(3t\lambda_{0}-f)^{2} + \frac{4t^{2}f^{2} - 12t^{2}e}{3\delta_{t}} \\
	&= \frac{1}{3} \left( \frac{2t}{\delta_{t}} \right)^{2} \left( -\left( \frac{\delta_{t}}{2t} \right)^{2}(3t\lambda_{0} - f)^{2} + (d-e) \delta_{t} \right)
\end{align*}
is a square in $\Q$ for some $\lambda_{0} \in \Q$,
that is,
the following equation has a solution $(x, y) \in \Q^{\oplus 2}$:
\[
	x^{2}+3y^{2} = (d-e)\delta_{t},
\]
where
\[
	(x, y)
	= \left( \frac{\delta_{t}}{2t}(3t\lambda_{0} - f), \frac{\delta_{t}}{2t}\sqrt{D_{t, e, f}(\lambda_{0})} \right).
\]
This completes the proof.
\end{proof}

\begin{proof}[Proof of \cref{main_cubic}]
Let $\lambda = (\lambda_{0}, \lambda_{1}, \lambda_{2}) \in \Q^{\oplus 3}$
and $\Lambda = \Z\langle \lambda, \epsilon \rangle + \Z\langle \lambda, \epsilon^{\sigma} \rangle + \Z\langle \lambda, \epsilon^{\sigma^{2}} \rangle$.
Then, the denominators of the coefficients of $\epsilon$ for all $v \in \Lambda \subset \Q\epsilon+\Q\epsilon^{\sigma}+\Q\epsilon^{\sigma^{2}}$ are bounded by the product of the denominators of $\lambda_{0}, \lambda_{1}, \lambda_{2}$.
On the other hand,
$\Lambda$ contains a vector $\langle \lambda, \epsilon \rangle$,
for which the coefficient of $\epsilon$ is exactly $\lambda_{0}$.
Therefore,
for every infinite family of $\lambda = (\lambda_{0}, \lambda_{1}, \lambda_{2}) \in \Q^{\oplus 3}$
such that the denominators of $\lambda_{0}$ are unbounded,
the corresponding family of lattices $\Lambda = \Z\langle \lambda, \epsilon \rangle + \Z\langle \lambda, \epsilon^{\sigma} \rangle + \Z\langle \lambda, \epsilon^{\sigma^{2}} \rangle$ includes infinitely many distinct lattices.

In what follows,
we prove that
there exists an infinite family of $\lambda = (\lambda_{0}, \lambda_{1}, \lambda_{2}) \in \Q^{\oplus 3}$
such that the denominators of $\lambda_{0}$ are unbounded
and the corresponding family of lattices $\Lambda = \Z\langle \lambda, \epsilon \rangle + \Z\langle \lambda, \epsilon^{\sigma} \rangle + \Z\langle \lambda, \epsilon^{\sigma^{2}} \rangle$ are of type $A_{3}$.
We may assume that $t \neq 0$ as noted already.
Take a matrix
\[
	A = \left( \begin{matrix}
		1 & 0 & 0 \\
		-1 & 1 & 0 \\
		0 & -1 & 1 
		\end{matrix} \right) \in \GL(3, \Z),
\]
which appeared in \cref{Higa_example}.
Then, as mentioned therein,
it holds that
\[
	A
	\left( \begin{matrix}
		2 & 1 & 1 \\
		1 & 2 & 1 \\
		1 & 1 & 2 
		\end{matrix} \right)
	{^{t}A}
	= \left( \begin{matrix}
		2 & -1 & 0 \\
		-1 & 2 & -1 \\
		0 & -1 & 2 
		\end{matrix} \right).
\]
Therefore,
by \cref{base_change,quadratic},
we can construct lattices $\Lambda = \Z\langle \lambda, \epsilon \rangle + \Z\langle \lambda, \epsilon^{\sigma} \rangle + \Z\langle \lambda, \epsilon^{\sigma^{2}} \rangle$ of type $A_{3}$
from the solutions $(x, y) \in \Q^{\oplus 2}$ of the equation $x^{2}+3y^{2} = \delta_{t}$.

In what follows,
we fix $(d, e, f) = (2, 1, 2)$.
Then, the proof of \cref{quadratic} shows that
every solution $(x, y) \in \Q^{\oplus 2}$ of the equation $x^{2}+3y^{2} = \delta_{t}$
corresponds to $\lambda_{0}$ as follows:
\[
	\lambda_{0} = \frac{2}{3}\left( \frac{x}{\delta_{t}} + \frac{1}{t} \right).
\]
Therefore,
for each fixed $t$,
the denominators of $\lambda_{0}$ are unbounded
if and only if the denominators of $x$ are unbounded.
On the other hand,
there exist only finitely many solutions $(x, y) \in \Q^{\oplus 2}$ of the equation $x^{2}+3y^{2} = \delta_{t}$ with bounded denominators of $x$.
Finally, note that
the conic defined by $x^{2}+3y^{2} = \delta_{t}$ has infinitely many rational points $(x, y) \in \Q^{\oplus 2}$ parametrized by the slopes $s \in \Q \cup \{ \infty \}$ of the lines passing through a base point $(t+3/2, 3/2)$.
As a conclusion,
we have an (explicitly parameterized) infinite family of $\lambda = (\lambda_{0}, \lambda_{1}, \lambda_{2}) \in \Q^{\oplus 3}$
such that the denominators of $\lambda_{0}$ are unbounded
and the corresponding family of lattices
\[
	\Lambda
	= \Z\langle \lambda, \epsilon \rangle + \Z\langle \lambda, \epsilon^{\sigma} \rangle + \Z\langle \lambda, \epsilon^{\sigma^{2}} \rangle
	\subset F
\]
are of type $A_{3}$.
This completes the proof.
\end{proof}

\begin{remark}
The above proof (and the whole proof of \cref{quadratic}) shows that,
for every fixed cyclic cubic field $F$,
we can construct a sub $\Z$-module $\Lambda$ of $F$
such that $(\Lambda, \Tr)$ is of type $A_{3}$ explicitly.

\end{remark}

\begin{remark} \label{self-dual}
It is known that
a general Galois extension $L/K$ of odd degree
has a self-dual normal $K$-basis \cite{BF-Lenstra}.
In the case of cyclic cubic extension $F$ of $\Q$,
we can say more:
If we take $(d, e, f) = (1, 0, 1)$ instead of $(2, 1, 2)$ in the proof of \cref{main_cubic},
then we obtain the same equation $x^{2}+3y^{2} = \delta_{t}$ with a different correspondence
\[
	\lambda_{0} = \frac{2}{3}\left( \frac{x}{\delta_{t}} + \frac{1}{2t} \right).
\]
From this correspondence,
we obtain infinitely many distinct lattices $\Lambda \subset F$
which have self-dual normal $\Z$-bases and are parametrized explicitly.
Note that
it is an immediate consequence of \cite{BF-Lenstra} (or \cite{Conner-Perlis})
that $F$ contains at least one lattice of type $A_{3}$
since the equality
\[
	\left( \begin{matrix}
		2 & 1 & 1 \\
		1 & 2 & 1 \\
		1 & 1 & 2
		\end{matrix} \right)
	= \left( \begin{matrix}
		0 & 1 & 1 \\
		1 & 0 & 1 \\
		1 & 1 & 0
		\end{matrix} \right)
		\left( \begin{matrix}
		1 & 0 & 0 \\
		0 & 1 & 0 \\
		0 & 0 & 1
		\end{matrix} \right)
		\left( \begin{matrix}
		0 & 1 & 1 \\
		1 & 0 & 1 \\
		1 & 1 & 0
		\end{matrix} \right)
\]
gives an explicit transform of a self-dual $\Q$-basis to a $\Z$-basis of a lattice of type $A_{3}$.
\end{remark}

\section{Quadratic fields containing lattices of type $A_{2}$}

In this section,
we prove a complementary result for lattices of type $A_{2}$ in quadratic fields.
Let $d \geq 1$ be a square-free positive integer
\footnote{
	Even if $d$ is not square-free,
	we can replace it to another $d'$ which is square-free
	such that $\Q(\sqrt{d}) \simeq \Q(\sqrt{d'})$.
	}
and
\[
	F = \Q(\sqrt{\pm d}) \simeq \Q[t]/(t^{2} \mp d);
	\sqrt{\pm d} \mapsto t
\]
be a quadratic \'etale $\Q$-algebra.
Then, the bilinear form
$\Tr : F \times F \to \Q; (\alpha, \beta) \mapsto \Tr(\alpha\bar{\beta})$
is positive-definite and symmetric,
where $\bar{\cdot}$ denotes the complex conjugate.
Here, note that
\[
	\Tr(x+y\sqrt{\pm d}) = 2x
	\quad (x, y \in \Q)
\]
regardless of whether $\pm d = 1$ or not.

\begin{proposition}
In the above setting,
there exists a lattice $\Lambda \subset F$ of type $A_{2}$
if and only if $d = 3$.
Moreover, if $d = 3$,
then there exist infinitely many distinct lattices $\Lambda \subset F$ of type $A_{2}$,
and among them, only $\Z \cdot (1+\sqrt{\pm3})/2+\Z \cdot (1-\sqrt{\pm3})/2$ has the normal $\Z$-basis.
\end{proposition}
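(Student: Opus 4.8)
The plan is to work entirely with the explicit description of $\Tr$ on the quadratic \'etale algebra $F = \Q(\sqrt{\pm d})$, namely $\Tr(x+y\sqrt{\pm d}) = 2x$. A lattice $\Lambda \subset F$ of type $A_2$ has a $\Z$-basis $(e_1, e_2)$ with Gram matrix $\left(\begin{smallmatrix} 2 & -1 \\ -1 & 2 \end{smallmatrix}\right)$, whose determinant is $3$. On the other hand, $\disc(\Lambda, \Tr) \equiv d_F \bmod \Q^{\times 2}$ for any full-rank sublattice, and $d_F$ equals $\pm 4d$ up to squares (more precisely $d_F \in \{\pm d, \pm 4d\} \cdot \Q^{\times 2}$ depending on congruences mod $4$). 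So the first step is: if $\Lambda$ of type $A_2$ exists, then $3 \equiv \pm d \bmod \Q^{\times 2}$, forcing the squarefree integer $d$ to equal $3$. This is the quick necessity direction and it mirrors \cref{square_criterion}.

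Next, for the sufficiency and the "infinitely many'' claim in the case $d=3$, I would parametrize sublattices directly. Write $e_1 = a_1 + b_1\sqrt{\pm 3}$ and $e_2 = a_2 + b_2\sqrt{\pm 3}$ with $a_i, b_i \in \Q$. Using $\Tr(\alpha\bar\beta) = \Tr(\alpha\beta^\sigma)$ where $\sigma$ is the nontrivial automorphism, one computes
\[
	\Tr(e_i \bar{e_j}) = 2a_i a_j \mp 6 b_i b_j
	\quad(\text{sign matching } \pm 3),
\]
so the three equations $\Tr(e_1\bar e_1) = 2$, $\Tr(e_2 \bar e_2) = 2$, $\Tr(e_1 \bar e_2) = -1$ become
\[
	a_1^2 \mp 3b_1^2 = 1,\qquad a_2^2 \mp 3 b_2^2 = 1,\qquad a_1 a_2 \mp 3 b_1 b_2 = -\tfrac12.
\]
This is a system over the conic $a^2 \mp 3 b^2 = 1$, which has the rational point $(1,0)$ and hence infinitely many rational points, parametrized by slopes. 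The task is then to show one can choose two such points $(a_1,b_1), (a_2,b_2)$ on the conic whose "inner product'' $a_1a_2 \mp 3b_1b_2$ equals $-\tfrac12$, and that this can be done in infinitely many genuinely distinct ways (distinctness measured, as in the proof of \cref{main_cubic}, by unbounded denominators of the coordinates). Concretely, the relation $a_1a_2 \mp 3b_1b_2 = -\tfrac12$ together with the two conic equations pins down $(a_2,b_2)$ from $(a_1,b_1)$ via the group law on the conic (a rotation by a fixed "angle''), so each choice of $(a_1,b_1)$ yields a valid basis; infinitely many $(a_1,b_1)$ with unbounded denominators give infinitely many distinct lattices.

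Finally, for the last assertion — that among these lattices only $\Lambda_0 := \Z\cdot\frac{1+\sqrt{\pm3}}{2} + \Z\cdot\frac{1-\sqrt{\pm3}}{2}$ has a normal $\Z$-basis — I would argue as follows. A normal $\Z$-basis of a lattice in $F$ means a basis of the form $(\gamma, \gamma^\sigma)$; then $\Lambda$ is stable under $\Gal(F/\Q)$ and is generated by a single element as a $\Z[\Gal]$-module. For such a $\gamma = a + b\sqrt{\pm3}$, the type-$A_2$ conditions read $\Tr(\gamma\bar\gamma) = 2$ and $\Tr(\gamma\bar\gamma^\sigma) = \Tr(\gamma^2) = -1$ (after possibly replacing $\gamma$ by $-\gamma^\sigma$ to fix the sign), i.e. $2a^2 \mp 6b^2 = 2$ and $2a^2 \pm 6b^2 = -1$; adding and subtracting gives $a^2 = 1/4$ and $\mp 3b^2 = -3/4$, so $a = \pm\tfrac12$, $b = \pm\tfrac12$ (only the $+3$... wait, one must check signs: this forces $b^2 = 1/4$ in the $\Q(\sqrt{3})$ case and also in the $\Q(\sqrt{-3})$ case after the sign bookkeeping), hence $\gamma = \frac{\pm1 \pm \sqrt{\pm 3}}{2}$ and $\Lambda = \Lambda_0$ in every case. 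So the normal-$\Z$-basis lattice is unique. The main obstacle I anticipate is the sign bookkeeping in this last step: getting the $\pm$ in $\Tr(\gamma\bar\gamma^\sigma)$ right, handling the two cases $F = \Q(\sqrt{3})$ and $F = \Q(\sqrt{-3})$ uniformly, and making sure the normalization (which of $\gamma, \gamma^\sigma$ plays the role of $e_1$, and the overall sign of $\gamma$) does not accidentally produce a second orbit; a careful case split on the residue of the numerator mod $4$, together with the constraint that $(\gamma,\gamma^\sigma)$ actually be a $\Z$-basis (determinant $\pm1$ of the change-of-basis matrix to $(1,\sqrt{\pm3})$), should close this cleanly.
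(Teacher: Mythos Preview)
Your necessity argument via the discriminant is correct and cleaner than the paper's: the paper instead homogenizes the Diophantine system to integer variables with $\gcd = 1$, argues modulo $d$ that $\pm 2 Z_1 Z_2 \equiv -Z_1 Z_2 \pmod d$ to reach a contradiction, and then treats $d = 1$ separately by parametrizing the unit circle. Your observation that $\#(\Lambda^*/\Lambda) = 3$ must lie in $d \cdot \Q^{\times 2}$ (from the Gram determinant $4d$ of the $\Q$-basis $(1,\sqrt{\pm d})$) dispatches this in one line, in the spirit of \cref{square_criterion}. Do note, though, that in the CM case the relevant quantity is the determinant of the Hermitian trace form, not the field discriminant $d_F$ itself; these differ by sign, which is why no ``$\pm d$'' ambiguity actually arises.

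The remainder of your plan---parametrize the conic, solve for the second basis vector given the first, argue unbounded denominators, and for the normal-basis claim impose $e_2 = e_1^\sigma$---is exactly the paper's approach. However, you carry a sign error throughout: the identity $\bar\beta = \beta^\sigma$ holds only in the CM case $F = \Q(\sqrt{-3})$; for $F = \Q(\sqrt{3})$ the complex conjugate is the identity. Computing directly, $\Tr\bigl((a_i + b_i\sqrt{\pm d})\,\overline{(a_j + b_j\sqrt{\pm d})}\bigr) = 2(a_i a_j + d\, b_i b_j)$ with the \emph{same} plus sign in both cases (the form is positive definite, after all), so the correct system is $a_i^2 + 3b_i^2 = 1$ and $2a_1a_2 + 6b_1b_2 = -1$, uniformly in the sign of $\pm 3$. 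Likewise your identity $\Tr(\gamma\,\overline{\gamma^\sigma}) = \Tr(\gamma^2)$ fails in the real case, where it equals $2N(\gamma)$ instead. Once this is fixed, the normal-basis step reduces (as in the paper) to imposing $a_1 = a_2$, $b_1 = -b_2$, which gives $4a_1^2 = 1$ and $4b_1^2 = 1$ and hence the unique lattice claimed.
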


\begin{proof}
Since $F$ has a $\Q$-basis $(1, \sqrt{\pm d})$,
it contains a lattice of type $A_{2}$ having a $\Z$-basis
$(x_{1}+y_{1}\sqrt{\pm d}, x_{2}+y_{2}\sqrt{\pm d})$ for $x_{1}, y_{1}, x_{2}, y_{2} \in \Q$
if and only if the following simultaneous equation holds:
\[
	\begin{cases}
	\Tr((x_{1}+y_{1}\sqrt{\pm d})(\overline{x_{1}+y_{1}\sqrt{\pm d}})) = 2, \\
	\Tr((x_{2}+y_{2}\sqrt{\pm d})(\overline{x_{2}+y_{2}\sqrt{\pm d}})) = 2, \\
	\Tr((x_{1}+y_{1}\sqrt{\pm d})(\overline{x_{2}+y_{2}\sqrt{\pm d}})) = -1, \\
	\end{cases}
\]
i.e.,
\[
	\begin{cases}
	x_{1}^{2}+dy_{1}^{2} = 1, \\
	x_{2}^{2}+dy_{2}^{2} = 1 ,\\
	2x_{1}x_{2} + 2dy_{1}y_{2} = -1. \\
	\end{cases}
\]

First, we prove the only if part by contradiction.
Suppose that $d \neq 1, 3$ and
the last simultaneous equation has a solution $(x_{1}, y_{1}, x_{2}, y_{2}) \in \Q^{\oplus 4}$.
Then,
the following simultaneous equation has a solution $(X_{1}, Y_{1}, Z_{1}, X_{2}, Y_{2}, Z_{2}) \in \Z^{\oplus 6}$
such that $\gcd(X_{1}, Y_{1}, Z_{1}) = \gcd(X_{2}, Y_{2}, Z_{2}) = 1$:
\[
	\begin{cases}
	X_{1}^{2}+dY_{1}^{2} = Z_{1}^{2}, \\
	X_{2}^{2}+dY_{2}^{2} = Z_{2}^{2} ,\\
	2X_{1}X_{2} + 2dY_{1}Y_{2} = -Z_{1}Z_{2}. \\
	\end{cases}
\]
Since $X_{1} \equiv \pm Z_{1} \bmod{d}$ and $X_{2} \equiv \pm Z_{2} \bmod{d}$,
we must have $\pm 2Z_{1}Z_{2} \equiv -Z_{1}Z_{2} \bmod{d}$.
Moreover, since we assume that $d \neq 1, 3$,
the last congruence implies that
$d$ has a prime divisor $p$ such that $Z_{1} \equiv 0 \bmod{p}$ or $Z_{2} \equiv 0 \bmod{p}$.
If $Z_{1} \equiv 0 \bmod{p}$,
then we must have $X_{1} \equiv 0 \bmod{p}$.
Moreover, since we assume that $d$ is square-free,
we must have $Y_{1} \equiv 0 \bmod{d}$,
which contradicts that $\gcd(X_{1}, Y_{1}, Z_{1}) = 1$.
The case $Z_{2} \equiv 0 \bmod{p}$ is exactly similar.
This completes the proof of the only if part for $d \neq 1$.
For $d = 1$,
note that every solution of the equation
$x_{i}^{2}+y_{i}^{2} = 1$ is given by $(x_{i}, y_{i}) = ((s_{i}^{2}-1)/(s_{i}^{2}+1), 2s_{i}/(s_{i}^{2}+1))$ with some $s_{i} \in \Q$,
but the third equation
\[
	2x_{1}x_{2} + 2y_{1}y_{2} = -1,
	\quad \text{i.e.}, \quad
	3(s_{1}s_{2}+1)^{2} = (s_{1}-s_{2})^{2}
\]
implies that $\sqrt{3} \in \Q$ or $s_{1} = \pm\sqrt{-1} \in \Q$, which is impossible.

For the if part,
note that the only solutions satisfying $x_{1} = x_{2}$ and $y_{1} = -y_{2}$ are
\[
	(x_{1}, y_{1}, x_{2}, y_{2})
	= \left( \frac{1}{2}, \frac{1}{2}, \frac{1}{2}, -\frac{1}{2} \right), \left( -\frac{1}{2}, -\frac{1}{2}, -\frac{1}{2}, \frac{1}{2} \right)
\]
which yield the (same) lattice $\Z \cdot (1+\sqrt{\pm3})/2+\Z \cdot (1-\sqrt{\pm3})/2 \subset \Q(\sqrt{\pm 3})$.
Hence, this is the unique lattice in $\Q(\sqrt{\pm 3})$ of type $A_{2}$
having a normal $\Z$-basis.

Finally,
for the infinitude of the lattices of type $A_{2}$ in $\Q(\sqrt{\pm 3})$,
we parametrize the solutions of the above simultaneous equation for $d = 3$.
First, 
the solutions $(x_{1}, y_{1}) \in \Q^{\oplus 2}$ of the single equation $x_{1}^{2}+3y_{1}^{2} = 1$
can be parametrized by the slopes $s = s_{1}/s_{0} \in \Q \cup \{ \infty \}$ ($s_{0}, s_{1} \in \Z$) of the lines passing through a base point $(1, 0)$ as follows:
\[
	(x_{1}, y_{1})
	= \left( -\frac{s_{0}^{2}-3s_{1}^{2}}{s_{0}^{2}+3s_{1}^{2}}, -\frac{2s_{0}s_{1}}{s_{0}^{2}+3s_{1}^{2}} \right).
\]
Here, we understand that $s = \infty$ gives the base point $(x_{1}, y_{1}) = (1, 0)$,
and we see that $x_{1} \neq 0$ for every $s \in \Q \cup \{ \infty \}$.
Then, the third equation $2x_{1}x_{2} + 6y_{1}y_{2} = -1$
implies that
\[
	x_{2}
	= - \frac{12s_{0}s_{1}y_{2} - (s_{0}^{2}+3s_{1}^{2})}{2(s_{0}^{2}-3s_{1}^{2})},
\]
and the second equation $x_{2}^{2}+3y_{2}^{2} = 1$ implies that
\[
	y_{2} = -\frac{(s_{0}+3s_{1})(s_{0}-s_{1})}{2(s_{0}^{2}+3s_{1}^{2})}, \frac{(s_{0}-3s_{1})(s_{0}+s_{1})}{2(s_{0}^{2}+3s_{1}^{2})}.
\]
The above solutions $(x_{1}, y_{1}, x_{2}, y_{2})$ parametrize
the $\Z$-bases $(x_{1}+y_{1}\sqrt{\pm 3}, x_{2}+y_{2}\sqrt{\pm 3})$ of some lattices $\Lambda_{s_{0}, s_{1}} \subset \Q(\sqrt{\pm 3})$ of type $A_{2}$.
On the other hand,
for $i = 1, 2$,
there exist only finitely many solutions $(x_{i}, y_{i}) \in \Q^{\oplus 2}$
of the equation $x_{i}^{2}+3y_{i}^{2} = 1$
with bounded denominators of $x_{i}$.
By comparing the coefficients of $1$ for vectors in $\Lambda_{s_{0}, s_{1}} \subset \Q+\Q\sqrt{\pm 3}$,
we see that the family $(\Lambda_{s_{0}, s_{1}})_{s_{0}, s_{1} \in \Z}$
contain infinitely many distinct lattices
(cf.\ the proof of \cref{main_cubic} in \S4).
This completes the proof.
\end{proof}

\section{Summary}

The following is a summary for lattices of $A_{n}$ with small $n$:
\begin{enumerate}
\item
$F = \Q$ contains no lattices of type $A_{1}$.
(cf.\ $(\Z, 2\Tr) = (\Z, \Tr(2 \cdot, \cdot))$ is of type $A_{1}$.)

\item
Among quadratic fields,
only $F = \Q(\sqrt{-3})$ and $\Q(\sqrt{3})$ contain lattices of type $A_{2}$,
and they also contains fractional ideals of type $A_{2}$ having normal $\Z$-basis (cf.\ \S5).

\item
No totally real cubic fields contain fractional ideals of type $A_{3}$ (cf.\ \cref{no_A3}),
and a totally real cubic field $F$ contains a lattice of type $A_{3}$ if and only if it is Galois.
If this is the case, $F$ also contains infinitely many lattices of type $A_{3}$ having normal $\Z$-basis (cf.\ \S 4).

\item
$F = \Q(\zeta_{5})$ contains a fractional ideal $(1-\zeta_{5})^{-1}$ of type $A_{4}$ (cf.\ \cref{cyclotomic})

\item
No cyclic number fields of degree $5$ contain lattices of type $A_{5}$ (cf.\ \cref{only_p=3}).

\item
$F = \Q(\zeta_{5})$ contains a fractional ideal $(1-\zeta_{7})^{-2}$ of type $A_{6}$ (cf.\ \cref{cyclotomic})

\item
No cyclic number fields of degree $7$ contain lattices of type $A_{7}$ (cf.\ \cref{only_p=3}).

\item
No cyclic number fields of degree $8$ contain lattices of type $A_{8}$ (cf.\ \cref{even_cyclic}).
\end{enumerate}

Thus, it is natural to ask

\begin{question}
Does there exist a (Galois) number field $F$ of degree $8$
and a fractional ideal (or a lattice) $\Lambda$ of $F$ such that $(\Lambda, \Tr)$ is of type $A_{8}$?
\end{question}

Finally,
note that lattices of $D_{4}$, $E_{6}$, and $E_{8}$ can be obtained as fractional ideals of e.g.\ $F = \Q(\zeta_{8}), \Q(\zeta_{9})$, and $\Q(\zeta_{20})$, respectively.
For details including the cases of the Coxeter-Todd lattice of rank 12 and the Leech lattice of rank 24, see e.g.\ \cite[pp. 76--78]{BF_Hirzebruch70}.
In contrast,
it is known that $D_{2n}$ for $n \geq 3$ (and $A_{n}$ for $n \neq p-1$ for any prime number $p$) cannot be realized in cyclotomic fields \cite[Theorem 3.2]{BF_Hirzebruch70}.
While the authors cannot find references for $D_{2n}$ except for the case of cyclotomic fields,
it is clear that one should take care of prime ideals $\mathfrak{p}$ of $\Z_{F}$ above $2$
so that $\Lambda^{*}/\Lambda \simeq (\Z/2\Z)^{\oplus 2}$.

\appendix
\section{Genericity of Shanks polynomial}

Let $k$ be a field,
and set $f_{t}(x) = x^{3} - tx^{2} - (t+3)x - 1 \in k[x, t]$.

\begin{proposition} \label{genericity}
Let $k$ be a field and $K/k$ be a cyclic cubic extension.
For simplicity, suppose that $k$ contains no primitive third roots of unity.
\footnote{
	If $k$ contains a primitive root of unity,
	then Kummer theory shows that $K \simeq k[x]/(x^{3}-t)$ for some $t \in k$.
	}
Then, there exists an element $t \in k$ such that
$K \simeq k[x]/f_{t}(x)$,
i.e., $K$ is generated by a (single) root of $f_{t}$ over $k$.
\end{proposition}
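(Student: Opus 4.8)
The plan is to recast the statement as a descent problem on $\PP^1$ and to settle it via Hilbert's Theorem~90. Write $\Gal(K/k) = \langle \sigma \rangle$, and let $M \colon x \mapsto -1/(1+x)$ be the Möbius transformation attached to $A := \left(\begin{smallmatrix} 0 & -1 \\ 1 & 1 \end{smallmatrix}\right)$; it has order $3$ in $\operatorname{PGL}(2,k)$. First I would record the elementary fact that, over any field, $M$ permutes the three roots of $f_t$ cyclically, together with its relevant converse: if $\theta$ generates $K$ over $k$ and $\sigma(\theta) = M(\theta)$, then the $k$-conjugates of $\theta$ are exactly $\theta, M(\theta), M^2(\theta)$, and a short computation of their elementary symmetric functions (the third is $1$, the second is $-(t+3)$ with $t := \theta + M(\theta) + M^2(\theta)$) shows that $\theta$ is a root of $f_t$. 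Since $t = \Tr_{K/k}(\theta) \in k$ and $f_t$ is then the (degree $3$, hence irreducible) minimal polynomial of $\theta$, this gives $K \simeq k[x]/f_t(x)$. Thus it suffices to produce a generator $\theta$ of $K/k$ with $\sigma(\theta) = M(\theta)$ --- equivalently, after replacing $\sigma$ by $\sigma^{-1}$, with $\sigma(\theta) = M^{-1}(\theta)$.

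Next I would observe that the set of $\theta \in \PP^1(K)$ with $\sigma(\theta) = M^{-1}(\theta)$ is exactly the set of $k$-rational points of the twist ${}_cZ$ of $\PP^1$ along the cocycle $c \colon \Gal(\bar k/k) \twoheadrightarrow \langle \sigma \rangle \xrightarrow{\ \sigma \mapsto M\ } \langle M \rangle \subset \operatorname{PGL}(2,k)$. The crucial point is that this twist is trivial. Since $c$ becomes trivial after restriction to $\Gal(\bar k/K)$, the conic ${}_cZ$ splits over the cubic extension $K$; its class in $\operatorname{Br}(k)$ is therefore both $2$-torsion (it is a form of $\PP^1$) and $3$-torsion (by $\operatorname{cor}_{K/k} \circ \operatorname{res}_{K/k} = [K:k]$), hence zero, so that ${}_cZ \simeq \PP^1_k$. (Alternatively, $M$ is also the class of the order-$3$ matrix $-A = \left(\begin{smallmatrix} 0 & 1 \\ -1 & -1 \end{smallmatrix}\right) \in \GL(2,k)$ --- one gets $(-A)^3 = I$ from its characteristic polynomial $\lambda^2 + \lambda + 1$ --- so $c$ lifts to a homomorphism into $\GL(2,k)$, which is a coboundary by Hilbert's Theorem~90 for $\GL_2$.) In particular ${}_cZ(k)$ is non-empty; in fact $\lvert {}_cZ(k) \rvert = \lvert \PP^1(k) \rvert \ge 3$.

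Finally I would check that a suitable $k$-point of ${}_cZ$ really generates $K$. If $\operatorname{char}(k) \ne 3$, the fixed points of $M$ on $\PP^1(\bar k)$ are the primitive cube roots of unity, which by hypothesis generate a quadratic extension $k(\zeta_3) \ne k$; hence $M$ has no $k$-rational fixed point, so every $\theta \in {}_cZ(k)$ lies outside $\PP^1(k)$, and therefore $k(\theta) = K$ because $[K:k] = 3$ is prime. If $\operatorname{char}(k) = 3$, the only $M$-fixed $k$-point is $\theta = 1$, and since $\lvert {}_cZ(k) \rvert = \lvert k \rvert + 1 \ge 4$ one may choose $\theta \ne 1$, which again generates $K$; here the hypothesis on cube roots of unity is automatic. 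In either case this produces the required $\theta$, completing the proof.

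The main obstacle will be the triviality of the twist ${}_cZ$ --- equivalently, the vanishing of its Brauer obstruction --- together with the small amount of characteristic-$3$ bookkeeping; the hypothesis that $k$ has no primitive cube root of unity is used precisely to rule out a $k$-rational fixed point of $M$ (in characteristic $\ne 3$). A more hands-on alternative for $\operatorname{char}(k) \ne 3$ would be to pass to $L := k(\zeta_3)$, solve $\sigma(u) = \zeta_3^{2}\,u$ in $(KL)^\times$ by multiplicative Hilbert~90 over $L$ (the constant $\zeta_3^{2}$ is a $1$-cocycle because $\zeta_3^{6} = 1$), and then descend to $K$: with $\tau$ the nontrivial element of $\Gal(L/k)$, the element $a := u\,\tau(u) \in k^\times$ satisfies $a^{3} \in N_{L/k}(L^\times)$ while $k^\times/N_{L/k}(L^\times)$ is $2$-torsion, so $a$ is a norm and $u$ can be rescaled accordingly; for $\operatorname{char}(k) = 3$ one would instead realize $K/k$ as an Artin--Schreier extension and use additive Hilbert~90.
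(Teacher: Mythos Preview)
Your argument is correct, but it takes a substantially different route from the paper's. The paper gives a direct, two-line construction: choose any $\alpha \in K \setminus k$ with $\Tr_{K/k}(\alpha) = 0$ (the trace-zero subspace has $k$-dimension $2$), set $u := \sigma(\alpha)/\alpha$, and check by hand that $1 + u + u\,\sigma(u) = \alpha^{-1}\Tr(\alpha) = 0$ and $N(u) = 1$, whence $f_{\Tr(u)}(x) = (x-u)(x-\sigma u)(x-\sigma^{2}u)$; the hypothesis on cube roots is invoked only to exclude $u \in k$ (else $u^{3} = N(u) = 1$ forces $u = 1$ and $\alpha \in k$). In fact the paper's $u$ satisfies $\sigma(u) = -(1+u)/u = M^{2}(u)$, so it is precisely a $k$-point of your twisted $\PP^{1}$ for the other choice of generator---the two proofs produce the same object. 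Your cohomological framing explains \emph{why} such a point must exist (the obstruction lies in $\operatorname{Br}(k)[2] \cap \operatorname{Br}(k)[3] = 0$) and would adapt immediately to any order-$3$ element of $\operatorname{PGL}_{2}(k)$, while the paper's argument is shorter, fully explicit, uniform in the characteristic without separate bookkeeping, and hands you a concrete root of a concrete $f_{t}$ without invoking Brauer groups or non-abelian Hilbert~90.
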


\begin{proof}
Let $\sigma$ be a fixed generator of the Galois group of $K/k$.
First we take an element $\alpha \in \op{Ker}(\Tr : K \to k) \setminus k$,
which is possible because the trace map $\Tr$ is $k$-linear and $[K : k] \geq 3$.
Set $u = \alpha^{\sigma-1}$.
Then, we see that
\[
	1+u+u^{1+\sigma}
	= 1+\alpha^{\sigma-1}+\alpha^{\sigma^{2}-1}
	= \alpha^{-1}\Tr(\alpha)
	= 0
\]
and $N(u) = 1$.
This implies that
\begin{align*}
	f_{\Tr(u)}(x)
	&= x^{3} - \Tr(u)x^{2} - \Tr(u+1)x - 1
		= x^{3} - \Tr(u)x^{2} + \Tr(u^{1+\sigma})x - N(u) \\
	&= (x-u)(x-u^{\sigma})(x-u^{\sigma^{2}}).
\end{align*}
On the other hand,
since we assume that $k$ contains no primitive third roots of unity,
the conditions $\alpha \not\in k$ and $N(u) = 1$ imply that $u \not\in k$.
Therefore, $K$ is generated by a single root $u$ of $f_{\Tr(u)}$ as desired.
\end{proof}

The above proof is suggested by \cite[Lemma 4.2]{Kashio-Sekigawa}.

\section{``Fake'' $A_{3}$-root lattices}

Let $F/\Q$ be a cyclic cubic field
which is the minimal splitting field of a Shanks polynomial $f_{t}$.
Then, we see that
2 is unramified in $F$,
and it splits completely in $F$ if and only if $t \not\in \Z_{2}$.

If 2 is totally inert in $F/\Q$,
then there exist no fractional ideals of discriminant 4
because $\Z_{F}/2\Z_{F} \simeq (\Z/2\Z)^{\oplus 3}$ as $\Z$-modules.

If 2 splits completely in $F/\Q$,
then there exist exactly three fractional ideals $\Lambda$ of discriminant 4
according to the choice of the prime ideal $\mathfrak{p}$ above 2,
that is,
\[
	\Lambda = \mathfrak{p}\mathfrak{C}_{F}^{-1}
	\quad \text{with} \quad
	\mathfrak{C}_{F}^{2} = \mathfrak{D}_{F}.
\]
Note that
the lattice $(\mathfrak{C}_{F}^{-1}, \Tr)$ is integral
\footnote{
	In fact,
	\cref{dual_computation} implies that
	the lattice $(\mathfrak{C}_{F}^{-1}, \Tr)$ is a unimodular lattice of rank $[F : \Q] = 3$.
	In particular, the corresponding ternary quadratic form is $x^{2}+y^{2}+z^{2}$,
	which is independent of $F$.
	}
because
\[
	\Tr(\mathfrak{C}_{F}^{-1} \times \mathfrak{C}_{F}^{-1})
	\subset \Tr\left( \mathfrak{C}_{F}^{-2} \right)
	= \Tr\left( \mathfrak{D}_{F}^{-1} \right)
	\subset \Z.
\]
In particular, a sublattice $(\mathfrak{p}\mathfrak{C}_{F}^{-1}, \Tr)$ of $(F, \Tr)$
is an odd lattice such that $(\mathfrak{p}\mathfrak{C}_{F}^{-1})^{*}/\mathfrak{p}\mathfrak{C}_{F}^{-1} \simeq \Z/4\Z$.
In this sense,
it is a certain ``fake'' $A_{3}$-lattice.
According to the Brandt-Intrau table \cite{LATTICES},
such a lattice is isometric to the one associated with the ternary quadratic form $x^{2}+y^{2}+4z^{2}$.

Finally, note also that
both of $\mathfrak{p}\mathfrak{C}_{F}^{-1}$ and its dual lattice $(\mathfrak{p}\mathfrak{C}_{F}^{-1})^{*} = \mathfrak{p}^{-1}\mathfrak{C}_{F}^{-1}$ are not closed under the action of $\Gal(F/\Q)$,
hence they cannot have normal $\Z$-bases.
This property is also comparable with the lattices of type $A_{3}$ constricted in \S4.

\begin{bibdiv}
\begin{biblist}
\bibselect{trace_form}
\end{biblist}
\end{bibdiv}

\end{document}